\newcommand{\R}{\ensuremath{\mathbb{R}}}
\newcommand{\conv}{\ensuremath{\text{conv}}}
\newcommand{\clconv}{\ensuremath{\text{cl conv}}}
\newcommand{\ignore}[1]{}
\newtheorem{theorem}{Theorem}
\newtheorem{proposition}{Proposition}
\theoremstyle{definition}
\theoremstyle{remark}
\newtheorem{remark}{Remark}
\title{Outlier detection in regression: conic quadratic formulations  
\thanks{
{This work was supported by a public grant as part of the Investissement d'avenir project, reference ANR-11-LABX-0056-LMH, LabEx LMH. A G\'omez is supported by grant FA9550-22-1-0369 of the US Air Force Office of Scientific Research.}}}
\author{Andr\'es G\'omez\thanks{Daniel J. Epstein Department of Industrial and Systems Engineering, University of Southern
California, CA 90089 (gomezand@usc.edu).}
\and Jos\'e Neto\thanks{SAMOVAR, Télécom SudParis, Institut Polytechnique de Paris, 91120 Palaiseau, France  
  ({jose.neto@telecom-sudparis.eu}).}}
\date{June 2023}
\begin{document}

\maketitle
\begin{abstract}
In many applications, when building linear regression models, it is important to account for the 
presence of outliers, i.e., corrupted input data points. Such problems can be formulated as mixed-integer optimization problems involving cubic terms, each given by the product of a binary variable and a quadratic term of the continuous variables. Existing approaches in the literature, typically relying on the linearization of the cubic terms using big-M constraints, suffer from weak relaxation and poor performance in practice. In this work we derive stronger second-order conic relaxations that do not involve big-M constraints.
Our computational experiments indicate that the proposed formulations are several orders-of-magnitude faster than existing big-M formulations in the literature for this problem.   
\end{abstract}

\section{Introduction}
\label{sec:pbs_description}

Several statistical and machine learning problems can be formulated as optimization problems of the form
\begin{equation}\label{eq:mipHard}
\begin{aligned}
    \min_{\bm{x},\bm{z}}\;&\sum_{i=1}^m (y_i-\bm{a_i^\top x})^2(1-z_i)\\
    \text{s.t.}\;&(\bm{x},\bm{z})\in F\subseteq \R^n\times \{0,1\}^m,
\end{aligned}
\end{equation}
where $(\bm{a_i},y_i)\in \mathbb{R}^{n+1}$ for all $i\in \{1,\dots,m\}$ are given data and $F$ is the feasible region. Problem \eqref{eq:mipHard} includes the \emph{least trimmed squares} as a special case, which is a focus of this paper and discussed at length in \S\ref{sec:LTS}, but also includes regression trees \cite{dunn2018optimal} (where $1-z_i=1$ indicates that a given datapoint is routed to a given leaf), regression problems with mismatched data \cite{mazumder2023linear} (where variables $\bm{z}$ indicate the datapoint/response pairs) and k-means \cite{kronqvist2022p} (where variables $\bm{z}$ represent assignment of datapoints to clusters). We point out that few or no mixed-integer optimization (MIO) approaches exist in the literature for \eqref{eq:mipHard}, as the problems are notoriously hard to solve to optimality, and heuristics are preferred in practice.

The hardness of problem \eqref{eq:mipHard} is due to weak relaxations such as standard big-M relaxations, 
producing trivial lower bounds of $0$ and gaps of $100\%$. The purpose of this work is thus to derive stronger relaxations of \eqref{eq:mipHard}, paving the way for efficient exact methods via MIO.

\subsection{Robust estimators and least trimmed squares}\label{sec:LTS}
Most statistical methods fail if the input data  is corrupted by so-called {\it{outliers}}. The latter correspond to 
erroneous input data points resulting, e.g., from measurement, transmission, recording errors or exceptional phenomena. 
Consider linear regression models, described by observations $\{(\bm{a_i},y_i)\}_{i=1}^m$ where $\bm{a_i}\in \mathbb{R}^n$ are the features and $y_i$ is the response associated with datapoint $i$. The classical ordinary least squares (OLS) estimator, defined as the minimizer of the optimization problem \begin{equation}\label{eq:OLSSimple}\tag{\tt{OLS}}\min_{\bm{x}\in \R^n}\sum_{i=1}^m(y_i-\bm{a_i^\top x})^2=\min_{\bm{x}\in \R^n}\|\bm{y}-\bm{Ax}\|_2^2,\end{equation}
where $\bm{A}\in \R^{m\times n}$ is the matrix with rows given by $\{\bm{a_i}\}_{i=1}^m$, is known to be sensitive to spurious perturbations of the data. Two robust modifications of \eqref{eq:OLSSimple} are commonly used in practice. The first one calls for the addition of an additional regularization term, resulting in the {\it least squares with Tikhonov regularization} problem. Specifically, given a suitable matrix $\bm{T}$ (typically taken as the identity), the estimator is the optimal solution of
\begin{equation}\label{eq:OLS}\min_{\bm{x}\in \R^n}\sum_{i=1}^m(y_i-\bm{a_i^\top x})^2+\lambda \|\bm{Tx}\|_2^2,\tag{{\tt{LS+L2}}}
\end{equation}
which is robust against small perturbations of the data \cite{el1997robust}. The second approach calls for replacing the least squares loss with the absolute value of the residuals, resulting in \emph{least absolute deviations} (LAD) problem 
\begin{equation}\label{eq:lad}\tag{{\tt{LAD}}}\min_{\bm{x}\in \R^n}\sum_{i=1}^m\left| y_i-\bm{a_i^\top x}\right|.\end{equation}
Estimator \eqref{eq:lad}, which generalizes the median to multivariate regression, is preferred to \eqref{eq:OLSSimple} in settings with outliers. 

Despite their popularity,
\eqref{eq:OLS} and \eqref{eq:lad} are known to be vulnerable to outliers. Robust estimators are often measured according to the breakdown point \cite{HAMPEL1971} -- the smallest proportion of contaminated data that can cause the estimator to take arbitrarily large aberrant values. Clearly, estimators \eqref{eq:OLSSimple} and \eqref{eq:OLS} have an unfavorable breakdown point of $0\%$: a single spurious observation with $\bm{a_i}=\bm{e_j}$, where $\bm{e_j}$ is the $j$-th standard basis vector, and $y_i\to \pm \infty$ will produce solutions where $x_j$ takes arbitrarily bad values. M-estimators \cite{HUBER73,HUBER11}, which include as special cases \eqref{eq:lad} and regression with respect to the Huber loss, also have breakdown point of 0\% \cite{rousseeuw1984robust}.

Robust estimators with better breakdown point include the 
{\it{least median of squares}} ({\tt{LMS}})
\cite{ROUSSEEUW84,ROUSSEEUW87} which minimizes the median of the  
squared residuals. The {\it{least quantile of squares}} ({\tt{LQS}}) \cite{Bertsimas:2014:LQR} approach generalizes 
the latter by minimizing the $q$-th order statistic, i.e., the $q$-th smallest 
residual in absolute value for some given integer $q\leq m$.  
The {\it{least trimmed squares problem}} ({\tt{LTS}}) \cite{ROUSSEEUW84,ROUSSEEUW87}, 
consists in  
minimizing, for some $h\in \mathbb{Z}$, the sum 
of the smallest $h$ residual squares. Specifically, letting $r_i(\bm{x})=|y_i-\bm{a_i^\top x}|$ be the $i$-th residual, and letting $\left|r_{(1)}\left(\bm{x}\right)\right|\leq 
\left|r_{(2)}\left(\bm{x}\right)\right|\leq
\ldots\leq 
\left|r_{(m)}\left(\bm{x}\right)\right|$ be the residuals sorted in nondecreasing magnitude order, the LTS estimator is the optimal solution of
\begin{equation}\label{eq:LTS}
\min_{\bm{x}\in\mathbb{R}^n}
{
\sum_{i=1}^{h}r_{(i)}\left(\bm{x}\right)^2+\lambda\|\bm{Tx}\|_2^2.
}\tag{\tt{LTS+L2}}
\end{equation}
Note that for $h=m$, \eqref{eq:LTS} corresponds to \eqref{eq:OLS}. Intuitively, for $h\leq m-1$, 
the datapoints corresponding to the $m-h$ largest residuals are observations flagged as outliers and discarded prior to using \eqref{eq:OLS} to fit a model on the remaining data. The original {\tt{LTS}} estimator had $\lambda=0$, but we consider here the version with additional $\ell_2$ regularization used in \cite{insolia_2022_outlier_and_feature}, where the additional regularization helps counteracting strong collinearities between features and improves performance in low signal-to-noise regimes.

The {\tt{LMS}} and 
{\tt{LTS}} estimators achieve an optimal breakdown point of 50\% \cite{ROUSSEEUW84,ROUSSEEUW87}. While {\tt{LMS}} was more popular originally, as it is less difficult to compute, Rousseeuw and  Van Driessen \cite{ROUSSEEUW06} argue that ``the {\tt{LMS}} estimator should be replaced by the {\tt{LTS}} estimator" due to several desirable properties, 
including smoothness and statistical efficiency. Unfortunately, computing the {\tt{LTS}} estimator is NP-hard \cite{Bernholt05,BERNHOLT2005b} and even hard to approximate \cite{MOUNT14}.

For the most part, problem \eqref{eq:LTS} is solved using heuristics. In particular, methods which alternate between fitting regression coefficients given a fixed set of $h$ non-outlier observations and determining new outliers given fixed regression coefficients $\bm{\bar x}$ are popular in the literature \cite{HAWKINS1994185,ROUSSEEUW06}.  Solution methods based on solving least trimmed squares with similar iterative approaches have also been proposed in the context of mixed linear regression with corruptions and more general problems, e.g. \cite{NEURIPS2019_c91e3483,pmlr-v97-shen19e} and references therein. Under some specific assumptions on the model, convergence results to an optimal solution have been established for such algorithmic schemes \cite{NIPS2015_bhatia_hard_thresholding}. However, in general, they do not provide guarantees and the quality of the resulting estimators can be poor.

Agull\'o \cite{AGULLO2001425} proposed a branch and bound algorithm to solve \eqref{eq:LTS} to optimality, which is shown to be fast in instances with $m\leq 30$, but struggles in larger instances. A first MIO formulation for \eqref{eq:LTS} was proposed in \cite{giloni2002least}, although the authors observe that the resulting optimization problem is difficult to solve and do not provide computations. To the best of our knowledge, the first implementation of a MIO algorithm for \eqref{eq:LTS} was done in \cite{ZIOUTAS05}, based on a formulation using big-M constraints, where the authors report solution times of two seconds for instances with $m=25$ and also comment on larger computational times for larger instances. In a subsequent work by the same research group \cite{ZIOUTAS09}, the authors report solution times in seconds for problems with $n=2$ and $m\leq 50$, and in minutes for problems with $100\leq m\leq 500$, although all computations are performed on synthetic data. In a recent paper, \cite{insolia_2022_outlier_and_feature} propose another big-M formulation for a generalization of \eqref{eq:LTS} (where sparisty is also imposed on regression variables $\bm{x}$), and report computational times in minutes for synthetic instances with $n$ and $m$ in the low hundreds. We discuss these MIO approaches further in \S\ref{sec:mio}. Finally, we point that exact big-M based MIO algorithms and continuous optimization heuristics were proposed in \cite{Bertsimas:2014:LQR} for the related {\tt{LMS}} problem: the authors report that MIO methods are dramatically outperformed by the continuous optimization approaches, with the objective value of MIO solutions being up to 400x worse than the objective of heuristic solutions (unless the heuristics solutions are used as a warm-start).

\subsection{Contributions, outline and notation}

In this work, we introduce strong, big-M free, conic quadratic reformulations for 
\eqref{eq:LTS} --and, more generally, problems of the form \eqref{eq:mipHard}. 
Extensive 
computational experiments on diverse families of instances (both synthetic and real) clearly point out strong improvements over current state-of-the-art 
approaches.  In particular, the proposed formulations results in orders-of-magnitude improvements over existing big-M formulations in our computations. We refrain from providing an estimate of the scalability of the approach: we show instances with $(n,m)=(20,500)$ that are solved in 10 seconds, and instances with $(n,m)=(4,50)$ that cannot be solved within a time limit of 10 minutes. Indeed, for MIO approaches, the effectiveness of the approach depends on more factors than simply the size of the instance, including the number $m-h$ of observations to be discarded, the regularization parameter $\lambda$, and the overall structure of the dataset (with synthetic instances being considerably easier than real ones). 

The paper is organized as follows. 
We close 
this section with some notation. 
In \S\ref{sec:mio} we review the literature on {\tt{MIO}} 
approaches for linear regression problems. 
Convexification results related to sets originating from \eqref{eq:mipHard} are presented in 
\S\ref{sec:convex}. The convexifications are used to derive 
conic quadratic reformulations of \eqref{eq:LTS} in
\S\ref{sec:lts}.  
 The experimental framework 
 and computational results are presented in  
 \S\ref{sec:computations}
and we conclude the paper in 
 \S\ref{sec:conclusions}.

{\bf{Notation. }} 
For any positive integer $n$, let $[n]$ stand for the set $\{1,2,\ldots,n\}$. 
The vectors and matrices are represented with {\bf{bold}} characters. 
The all-zero and all-one vectors and matrices (with appropriate dimensions) are represented by {$\bm{0}$} and {$\bm{1}$} respectively. 
The $i$-th unit vector is represented by $\bm{e_i}$. 
The notation $\bm{I}$ stands for the identity matrix. 
Given a vector $\bm{d}\in \R^n$, we let $\text{Diag}(\bm{d})\in \R^{n\times n}$ denote the diagonal matrix with elements $\text{Diag}(\bm{d})_{ii}=d_i$. Given a square matrix $\bm{Q}$, we let $\bm{Q}^\dagger$ denote the pseudoinverse of $\bm{Q}$.

\section{Review of MIO methods for outlier detection}\label{sec:mio}
There has been a recent trend of using mathematical optimization techniques to tackle hard problems arising in the context of linear regression. In particular, there is a stream of research focused on the best subset selection problem \cite{atamturk2019rank,atamturk2020safe,ben2022new,bertsimas2016best,bertsimas2020sparse,hazimeh2020fast,hazimeh2022sparse,hazimeh2022l0learn,mazumder2023subset,xie2020scalable,cozad2014learning,cozad2015combined,wilson2017alamo,dong2015regularization}, in which at most $k$ of the regression variables in \eqref{eq:OLS} can take non-zero values. Variants of best subset selection, in which information criteria are used to determine the number of non-zero variables, have also been considered in the literature \cite{kimura2018minimization,park2020subset,miyashiro2015mixed,gomez2021mixed}. Related models have also been used to tackle inference problems with graphical models and sparsity \cite{manzour2021integer,liu2022graph,atamturk2021sparse}. We point out that most of the approaches for sparse regression are based on improving continuous relaxations by exploiting a ridge regularization term $\lambda\|\bm{x}\|_2^2$ through the perspective reformulation \cite{frangioni2006perspective,gunluk2010perspective}. As we show in this paper, the Tikhonov regularization $\lambda\|\bm{Tx}\|_2^2$ is also fundamental for improving relaxations for \eqref{eq:LTS}.

Despite the plethora of {\tt MIO} approaches for sparse regression, there is a dearth of similar methods for regression problems with outliers. Indeed, problems such as \eqref{eq:LTS} appear to be fundamentally more difficult than sparse regression problems. Observe that problem \eqref{eq:LTS} admits the natural mixed-integer cubic formulation \cite{giloni2002least}
\begin{equation}\label{eq:LTS_MIOCubic}
\min_{\bm{x}\in\mathbb{R}^{n},\bm{z}\in \{0,1\}^m}\;
\sum_{i=1}^{m}\left(y_i-\bm{a_i^\top x}\right)^2(1-z_i)+\lambda\|\bm{Tx}\|_2^2 \text{ s.t. } \bm{1^\top z}\leq m-h,
\end{equation}
where $z_i=1$ if datapoint $i$ is flagged as an outlier and discarded, and $z_i=0$ otherwise. Note that \eqref{eq:LTS_MIOCubic} is a special case of \eqref{eq:mipHard}, where $F$ is given by a cardinality constraint. Formulation \eqref{eq:LTS_MIOCubic} cannot be effectively used with most MIO software. Indeed, its natural continuous relaxation, obtained by relaxing the binary constraints to bound constraints $\bm{0}\leq\bm{z}\leq \bm{1}$, is non-convex.
To circumvent this issue, Zioutas et al. \cite{ZIOUTAS05,ZIOUTAS09} reformulated \eqref{eq:LTS_MIOCubic} 
as the convex quadratic mixed integer optimization problem  
\begin{subequations}
\label{eq:lts_qmip}
\begin{align}
\min_{\bm{u},\bm{x},\bm{z}} &\sum_{i=1}^{m}{u_i^2}+\lambda\|\bm{Tx}\|_2^2\\
\mathrm{s.t.} &
-y_i+\bm{a_i^{\top}x}\leq u_i+z_i M & \forall i\in [m]\\
&y_i-\bm{a_i^{\top}x}\leq u_i+z_i M & \forall i\in [m]\\
&\bm{1}^{\top}\bm{z} \leq m-h\\ 
&\bm{u}\in\mathbb{R}^m_+, \bm{x}\in\mathbb{R}^{n}, 
\bm{z}\in\{0,1\}^m
\end{align}
\end{subequations}
where $M$ is a sufficiently large fixed constant and
$u_i$ represents the absolute value of the $i$-th residual.
Indeed, in any optimal solution 
$\left(\bm{u^*},\bm{x^*},\bm{z^*}\right)$ of \eqref{eq:lts_qmip}, having 
$z_i^*=1$ (resp. $z_i^*=0$) implies $u_i^*=0$ 
(resp. $u_i^*=\left|y_i-\bm{a_i}^{\top}\bm{x}\right|$), i.e. the objective value is sum of the squared residuals of the non-outlier datapoints.  

While formulation \eqref{eq:lts_qmip} can be directly used with most {\tt MIO} solvers, the natural continuous relaxation is trivial. Indeed, regardless of the data $(\bm{A},\bm{y},\bm{T})$, an optimal solution of the continuous relaxation is given by $\bm{u^*}=\bm{0}$, $\bm{x^*}=\bm{0}$ and $\bm{z^*}=\left((m-h)/m\right)\bm{1}$. The objective value of this relaxation is thus equal to the trivial lower bound of $0$ (resulting in a 100\% optimality gap), which leads to large branch-and-bound trees as solvers cannot effectively prune the search space. Moreover, the solutions of the continuous relaxations are essentially uninformative, thus {\tt MIO} solvers --which rely on these to produce feasible solutions and inform branching decisions-- struggle to tackle problem \eqref{eq:lts_qmip}. 

In fact, as we show in \S\ref{sec:convex}, any relaxation based on a convex reformulation of the individual cubic terms $\left(y_i-\bm{a_i^\top x}\right)^2(1-z_i)$ necessarily results in trivial bounds and solutions. We point out that this phenomenon sets apart regression problems with outliers from sparse regression problems: the continuous relaxations of the natural big-M formulations of sparse regression problems (e.g., see \cite{bertsimas2016best}) is equivalent to the least squares problem \eqref{eq:OLS}, producing non-trivial bounds and solutions. We conjecture that the difficulty to produce a ``reasonable" convex relaxation of \eqref{eq:LTS_MIOCubic} is the reason why few {\tt MIO} approaches exist for regression problems with outliers.
A notable exception is \cite{doi:10.1137/19M1306233} , which proposes strong conic quadratic formulations for outlier detection with time series data. However, the methodology proposed in that paper is tailored to time series data and cannot be generalized to problem \eqref{eq:LTS}.

\section{Convexification results} 
\label{sec:convex}
In this section we investigate the convex hull of sets related to terms arising in the formulation of problems such as \eqref{eq:LTS}.  
To motivate our approach, 
let us first study the convex hull of the set 
$$
Y_{c} = 
\left\{
\left(\bm{x},z,t\right)\in \mathbb{R}^{n}\times 
\left\{0,1\right\}\times \mathbb{R} \colon 
t\geq 
\left(c-\bm{a}^{\top}\bm{x}\right)^2\left(1-z\right) 
\right\}
$$
where $c\in\mathbb{R}$ is  a scalar. 
$Y_{c}$ may be interpreted as the   
mixed-integer epigraph of the error function associated with a single datapoint. 
As Proposition~\ref{prop:trivialRelax} below shows, any closed convex relaxation of $Y_{c}$ is trivial. 
In other words, any formulation of \eqref{eq:mipHard} based only on convex reformulations of each individual cubic term will result in 100\% gaps and non-informative relaxations. 
\begin{proposition}The closure of the convex hull of 
$Y_{c}$ is given by\label{prop:trivialRelax}
    $$\clconv\left(Y_{c}\right)=\R^n\times [0,1]\times \R_+.$$
\end{proposition}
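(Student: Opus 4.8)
The plan is to prove the two set inclusions separately, establishing that $\clconv(Y_c) = \R^n \times [0,1] \times \R_+$. The inclusion $\clconv(Y_c) \subseteq \R^n \times [0,1] \times \R_+$ is immediate: every point of $Y_c$ already satisfies $z \in \{0,1\} \subseteq [0,1]$ and $t \geq (c - \bm{a}^\top \bm{x})^2(1-z) \geq 0$, so $t \geq 0$; since the right-hand set is closed and convex, it contains the closed convex hull. The substance is the reverse inclusion.

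For the reverse inclusion, I would fix an arbitrary target point $(\bm{\bar x}, \bar z, \bar t) \in \R^n \times [0,1] \times \R_+$ and exhibit it as a limit of convex combinations of points in $Y_c$. The key observation is that $Y_c$ contains points with $z = 1$ and \emph{arbitrary} $\bm{x}$: indeed, whenever $z = 1$ the defining inequality reads $t \geq 0$, so $(\bm{x}, 1, t) \in Y_c$ for every $\bm{x} \in \R^n$ and every $t \geq 0$. This is exactly the mechanism that makes the relaxation trivial --- the outlier branch $z=1$ imposes no control on $\bm{x}$ whatsoever. I would use these ``free'' points to generate the whole $\bm{x}$-space and the nonnegative $t$-direction, while using a single point on the $z=0$ branch to reach values of $\bar z$ strictly below $1$.

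Concretely, I would first handle the generic case $\bar z \in [0,1)$. Pick the point $P_0 = (\bm{x_0}, 0, t_0) \in Y_c$ where $\bm{x_0}$ is chosen so that $c - \bm{a}^\top \bm{x_0} = 0$ (e.g. any $\bm{x_0}$ solving $\bm{a}^\top \bm{x_0} = c$, which exists as long as $\bm{a} \neq \bm{0}$; the degenerate case $\bm{a} = \bm{0}$ is even easier and handled directly) and $t_0 \geq 0$ is free. Then $P_0$ lies in $Y_c$ for any $t_0 \geq 0$. Pick the point $P_1 = (\bm{x_1}, 1, t_1) \in Y_c$ with $\bm{x_1}$ and $t_1 \geq 0$ arbitrary. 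The convex combination $\lambda P_0 + (1-\lambda) P_1$ with $\lambda = 1 - \bar z \in (0,1]$ has $z$-coordinate equal to $\bar z$; solving $\lambda \bm{x_0} + (1-\lambda)\bm{x_1} = \bm{\bar x}$ for $\bm{x_1}$ (possible since $1 - \lambda = \bar z > 0$ when $\bar z \neq 0$, and trivial when $\bar z = 0$ by taking $\bm{x_0} = \bm{\bar x}$ with the $z=0$ point only) and $\lambda t_0 + (1-\lambda) t_1 = \bar t$ for the free nonnegative $t_0, t_1$ recovers the exact target. Hence every such point lies in $\conv(Y_c)$ already, with no limit needed.

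The only remaining case is the boundary value $\bar z = 1$, and this is where a genuine closure argument is required: with $\bar z = 1$ a convex combination forces all constituent points onto the $z = 1$ face, on which $\bm{x}$ is free but we cannot simultaneously tune a second coordinate to hit an arbitrary $\bm{\bar x}$ from a single point --- so I expect this degenerate slice to be the main obstacle. I would resolve it by approximation: for each $k$, use the previous construction to produce a point of $\conv(Y_c)$ with $z$-coordinate $1 - 1/k$ and the other coordinates exactly $\bm{\bar x}$ and $\bar t$, then let $k \to \infty$ so that these points converge to $(\bm{\bar x}, 1, \bar t)$, placing it in $\clconv(Y_c)$. Combining all cases gives $\R^n \times [0,1] \times \R_+ \subseteq \clconv(Y_c)$, which together with the easy inclusion completes the proof.
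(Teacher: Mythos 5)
Your core construction for $\bar z\in(0,1)$ is correct and is essentially the paper's own argument: one point on the $z=1$ face (where $\bm{x}$ and $t\geq 0$ are free) and one point on the $z=0$ face chosen with zero residual, $\bm{a^\top x_0}=c$, combined with weights $\bar z$ and $1-\bar z$. However, you have swapped the two boundary cases, and this creates a genuine gap. The case $\bar z=1$ needs no argument at all: by your own key observation, $(\bm{\bar x},1,\bar t)\in Y_c$ directly, so the limit construction you spend on it is valid but superfluous. The case that actually requires the closure is $\bar z=0$, and your treatment of it is wrong. You claim it is ``trivial\ldots by taking $\bm{x_0}=\bm{\bar x}$ with the $z=0$ point only,'' but $(\bm{\bar x},0,\bar t)\in Y_c$ forces $\bar t\geq (c-\bm{a^\top\bar x})^2$, which an arbitrary $\bar t\geq 0$ need not satisfy. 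Worse, no convex combination can rescue this: any point of $\conv(Y_c)$ with $z$-coordinate $0$ must be a combination of points lying entirely on the $z=0$ face, and since $\bm{x}\mapsto(c-\bm{a^\top x})^2$ is convex, Jensen's inequality gives $t\geq (c-\bm{a^\top x})^2$ for the combination as well. So the slice of $\conv(Y_c)$ at $z=0$ is exactly the epigraph of the squared residual, and points below it lie only in the \emph{closure}. The repair is exactly the device you used at the wrong end: take the points $(\bm{\bar x},1/k,\bar t)\in\conv(Y_c)$ produced by your generic construction and let $k\to\infty$, which is what the paper does with its limit $z\to 0^+$.

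A secondary inaccuracy: you assert the degenerate case $\bm{a}=\bm{0}$ is ``even easier and handled directly.'' In fact, if $\bm{a}=\bm{0}$ and $c\neq 0$ the statement itself fails: the constraint becomes $t\geq c^2(1-z)$, which is linear in $(z,t)$ and hence survives both convex combinations and closure, so $\clconv(Y_c)=\{(\bm{x},z,t): z\in[0,1],\ t\geq c^2(1-z)\}\subsetneq \R^n\times[0,1]\times\R_+$. The proposition (and the paper's proof, which divides by $\|\bm{a}\|_2^2$) implicitly assumes $\bm{a}\neq\bm{0}$; you should state that assumption rather than claim the excluded case is easy.
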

\begin{proof}
   Consider any point $(\bm{\bar x},\bar z,\bar t)\in \R^n\times [0,1]\times \R_+$ with $0<\bar z<1$. Observe that 
   $$(\bm{\bar x},\bar z,\bar t)=
   \bar z\left(
   \frac{\bm{\bar x}}{\bar z}-\frac{1-\bar z}{\bar z}\frac{c \cdot \bm{a}}{\|\bm{a}\|_2^2},1,0\right)+(1- \bar z) \left(\frac{c\cdot\bm{a}}{\|\bm{a}\|_{2}^{2}},0,\frac{\bar t}{1-\bar z}\right),$$
   where both $\left(
   \frac{\bm{\bar x}}{\bar z}-\frac{1-\bar z}{\bar z}\frac{c \cdot \bm{a}}{\|\bm{a}\|_2^2},1,0\right)\in Y_{c}$ and $\left(\frac{c\cdot\bm{a}}{\|\bm{a}\|_{2}^{2}},0,\frac{\bar t}{1-\bar z}\right)\in Y_{c}$, and thus $(\bm{\bar x},\bar z,\bar t)\in \conv\left(Y_{c}\right)$. Moreover, since $(\bm{\bar x},0,\bar t)=\lim_{z\to 0^+}(\bm{\bar x},z,\bar t)$, we find that $(\bm{\bar x},\bar z,\bar t)\in \clconv\left(Y_{c}\right)$ even if $z=0$.
\end{proof}

Thus, to derive stronger relaxations, it is necessary to study a more general set, capturing more structural information about the optimization problem. In particular, the formulations we propose to tackle problem \eqref{eq:LTS} are based on a study of the set
\begin{equation}
\label{eq:YcQ_def}
Y_{c,\bm{Q}} =
\left\{
\left(\bm{x},z,t\right)\in \mathbb{R}^{n}\times 
\left\{0,1\right\}\times \mathbb{R} \colon 
t\geq \bm{x}^{\top}\bm{Q}\bm{x}+
\left(c-\bm{a}^{\top}\bm{x}\right)^2\left(1-z\right) 
\right\}
\end{equation}
where 
$\bm{Q}\in \mathbb{R}^{n\times n}$ represents a symmetric and positive definite matrix. 
We provide hereafter descriptions of the convex hull of 
$Y_{c,\bm{Q}}$ in the original space of variables 
for the homogeneous case (i.e., when $c=0$) and in an extended space in the non-homogeneous case ($c\neq 0$). 
 
\subsection{
Convexification for the homogeneous case}\label{sec:convexification_homogeneous}
The convexification of set 
$$
Y_{0,\bm{Q}} =
\left\{
\left(\bm{x},z,t\right)\in \mathbb{R}^{n}\times 
\left\{0,1\right\}\times \mathbb{R} \colon 
t\geq \bm{x}^{\top}\bm{Q}\bm{x}+
\left(\bm{a}^{\top}\bm{x}\right)^2\left(1-z\right) 
\right\}
$$
admits a relatively simple description in the original space of variables. 
\begin{proposition} \label{prop:convYi_}The closure of the convex hull of set $Y_{0,\bm{Q}}$ is

$$
\clconv\left(Y_{0,\bm{Q}}\right)=
\left\{
\left(\bm{x},z,t\right)\in \mathbb{R}^{n}
\times \left[0,1\right]\times \mathbb{R} \colon 
t\geq \bm{x^{\top} Q x} + 
\frac{\left(1-z\right)\left(\bm{a^{\top}x}\right)^2}{1+z\left\|\bm{Q}^{-1/2}\bm{a}\right\|_2^2}
\right\}.
$$
\end{proposition}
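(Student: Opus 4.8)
The plan is to identify $Y_{0,\bm{Q}}$ as a union of two convex sets and to compute the closed convex hull of that union by disjunctive programming. Fixing the binary variable, $z=1$ yields $S_1=\{(\bm{x},1,t):t\geq\bm{x}^\top\bm{Q}\bm{x}\}$ and $z=0$ yields $S_0=\{(\bm{x},0,t):t\geq\bm{x}^\top(\bm{Q}+\bm{a}\bm{a}^\top)\bm{x}\}$, both convex since $\bm{Q}$ positive definite forces $\bm{Q}+\bm{a}\bm{a}^\top$ positive definite. By Balas' description of the convex hull of a union of convex sets, $\clconv(Y_{0,\bm{Q}})$ is the closure of all convex combinations $\lambda\bm{p}_0+(1-\lambda)\bm{p}_1$ with $\bm{p}_0\in S_0$, $\bm{p}_1\in S_1$, $\lambda\in[0,1]$; matching the $z$-coordinate forces $\lambda=1-z$.

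For the substantive (reverse) inclusion I would fix $(\bm{x},z)$ with $z\in(0,1)$ and minimize $t$ over admissible decompositions. Writing $\bm{x}=(1-z)\bm{x}_0+z\bm{x}_1$ and introducing the perspective variables $\bm{v}_0=(1-z)\bm{x}_0$, $\bm{v}_1=z\bm{x}_1$, the smallest feasible $t$ equals the optimal value of minimizing $\frac{\bm{v}_0^\top(\bm{Q}+\bm{a}\bm{a}^\top)\bm{v}_0}{1-z}+\frac{\bm{v}_1^\top\bm{Q}\bm{v}_1}{z}$ subject to $\bm{v}_0+\bm{v}_1=\bm{x}$. Solving this equality-constrained convex quadratic program through its Lagrangian conditions gives the closed form $\bm{x}^\top\bigl[(1-z)(\bm{Q}+\bm{a}\bm{a}^\top)^{-1}+z\bm{Q}^{-1}\bigr]^{-1}\bm{x}$, the harmonic combination of the two quadratic forms.

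The main technical obstacle is simplifying this inverse to the stated scalar-denominator form. I would first apply the Sherman-Morrison identity to $(\bm{Q}+\bm{a}\bm{a}^\top)^{-1}$, which collapses the bracketed matrix into the rank-one perturbation $\bm{Q}^{-1}-\tfrac{1-z}{1+s}\bm{Q}^{-1}\bm{a}\bm{a}^\top\bm{Q}^{-1}$, where $s=\|\bm{Q}^{-1/2}\bm{a}\|_2^2=\bm{a}^\top\bm{Q}^{-1}\bm{a}$. A second application of Sherman-Morrison (valid since $z\in(0,1)$ keeps the relevant scalar denominator positive) inverts this to $\bm{Q}+\tfrac{\gamma}{1-\gamma s}\bm{a}\bm{a}^\top$ with $\gamma=\tfrac{1-z}{1+s}$; the scalar identity $\tfrac{\gamma}{1-\gamma s}=\tfrac{1-z}{1+zs}$ then yields exactly $\bm{x}^\top\bm{Q}\bm{x}+\frac{(1-z)(\bm{a}^\top\bm{x})^2}{1+z\|\bm{Q}^{-1/2}\bm{a}\|_2^2}$. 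Since the minimizing decomposition is explicit, this shows every point of the claimed set with $z\in(0,1)$ lies in $\conv(Y_{0,\bm{Q}})$.

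To close the argument I would dispatch the forward inclusion and the boundary. Denoting by $g(\bm{x},z)$ the right-hand-side function, the inclusion $\clconv(Y_{0,\bm{Q}})\subseteq\{t\geq g(\bm{x},z)\}$ follows once $g$ is shown convex on $\R^n\times[0,1]$ and valid for $Y_{0,\bm{Q}}$ (it reduces to $\bm{x}^\top(\bm{Q}+\bm{a}\bm{a}^\top)\bm{x}$ at $z=0$ and to $\bm{x}^\top\bm{Q}\bm{x}$ at $z=1$); convexity is automatic because $g$ is the value function of the jointly convex program above. Finally the cases $z\in\{0,1\}$ of the reverse inclusion are obtained by a limiting argument as $z\to0^+$ and $z\to1^-$, which is precisely where the closure is needed, mirroring the continuity step used in Proposition~\ref{prop:trivialRelax}. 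I expect the double Sherman-Morrison simplification to be the one delicate computation, everything else being standard disjunctive-programming machinery or routine continuity.
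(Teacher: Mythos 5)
Your proposal is correct, but it takes a genuinely different route from the paper. The paper guesses the candidate set, proves it is convex via an SDP lifting (Schur complement plus one Sherman--Morrison inversion), checks it relaxes both branches of the disjunction, and then establishes the reverse inclusion by a support-function argument: for any linear objective, the inner minimization over $\bm{x}$ yields a value function that is \emph{affine} in $z$, so some optimum has $z$ binary, forcing the candidate set inside $\clconv(Y_{0,\bm{Q}})$. You instead compute the hull constructively: slice the set into the two convex pieces $S_0,S_1$, take convex combinations, and evaluate the resulting perspective/infimal-convolution problem in closed form, obtaining the harmonic combination $\bm{x}^\top\bigl[(1-z)(\bm{Q}+\bm{a}\bm{a}^\top)^{-1}+z\bm{Q}^{-1}\bigr]^{-1}\bm{x}$, which your double Sherman--Morrison computation correctly collapses to the stated expression (your intermediate matrix $(1-z)(\bm{Q}+\bm{aa}^\top)^{-1}+z\bm{Q}^{-1}$ is exactly the paper's auxiliary matrix $\bm{W}$, so the core algebra coincides). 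What each approach buys: yours is constructive rather than verificational, makes the minimizing decomposition explicit, and shows the hull is actually attained for each fixed $z$ --- indeed, since your value function is continuous on $\mathbb{R}^n\times[0,1]$ and the $z\in\{0,1\}$ slices of the claimed set are precisely $S_0$ and $S_1$, the convex hull is already closed here, so your final limiting step is harmless but unnecessary; the paper's route avoids any appeal to hull-of-union machinery for unbounded sets (where closedness can genuinely fail) and its SDP-lifting viewpoint is the one reused and extended in the non-homogeneous case of Theorem~\ref{prop:convYc}. One caveat: invoking ``Balas'' is overkill and slightly misplaced for general unbounded convex sets; what you actually need is only the elementary fact that $\conv(S_0\cup S_1)$ consists of pairwise convex combinations, plus your explicit attainment argument, so you should phrase it that way to keep the proof airtight.
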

\begin{proof}
Let $T$ denote the set in the right-hand side of the equation in the statement of the proposition. We show next that: $\bullet$ $T$ is convex, $\bullet$ $T$ induces a relaxation of $Y_{0,\bm{Q}}$, and $\bullet$ optimization of a linear function over set $T$ is equivalent to optimization over $Y_{0,\bm{Q}}$.

\paragraph{$\bullet$ \textbf{Convexity}} We show convexity of $T$ by establishing it is equivalent to the SDP-representable set given by constraints
$$ 0\leq z\leq 1\text{,  }\begin{pmatrix}\bm{W}&\bm{x}\\\bm{x^\top}&t\end{pmatrix}\succeq 0,\; \bm{W}=\bm{Q^{-1}}-(1-z)\frac{\bm{Q^{-1}aa^\top Q^{-1}}}{1+\|\bm{Q^{-1/2}a}\|_2^2}.$$
Note that $\bm{W}\succ 0$ since for any $\bm{y}\neq \bm{0}$,
$$
\bm{y}^{\top}\bm{W y}\geq
\bm{y^{\top}Q^{\mathrm{-1}} y} 
-\frac{\left(\bm{a^{\top} Q^{\mathrm{-1}} y}\right)^{2}}{1+
\left\|\bm{Q}^{-1/2}\bm{a}\right\|_2^2}
\geq 
\left(\bm{y}^{\top}\bm{Q}^{-1} \bm{y}\right)\cdot
\left(
1-\frac{\|\bm{Q}^{-1/2}\bm{a}\|_2^2}{1+\|\bm{Q}^{-1/2}\bm{a}\|_2^2}
\right)>0,
$$
where the second inequality uses Cauchy-Schwarz inequality and the 
last one follows from $\bm{y}\neq \bm{0}$ and the definition of $\bm{Q^{-1}}\succ 0$. 
Since $\bm{W}$ is invertible, we find by using the Schur complement \cite{ALBERT69} that $\begin{pmatrix}\bm{W}&\bm{x}\\\bm{x^\top}&t\end{pmatrix}\succeq 0\Leftrightarrow t\geq \bm{x^\top W^{-1}x}$, and using the Sherman Morrison formula \cite{SM49,SM50} we can establish that $\bm{W^{-1}}=\bm{Q}+\frac{1-z}{1+z\|\bm{Q^{-1/2}a}\|_2^2}\bm{aa^\top}.$

\paragraph{$\bullet$ \textbf{Relaxation}} Observe that if $z=0$ then $T$ reduces to the inequality $t\geq \bm{x^\top Qx}+(\bm{a^\top x})^2$, and
if $z=1$ then $T$ reduces to $t\geq \bm{x^\top Qx}$. This is precisely the disjunction encoded by $Y_{0,\bm{Q}}$, hence $T$ is indeed a relaxation.

\paragraph{$\bullet$ \textbf{Equivalence}} 
Now, to prove $T\subseteq \clconv\left(Y_{0,\bm{Q}}\right)$,
 let us consider the optimization of an arbitrary linear function over 
the sets $Y_{0,\bm{Q}}$ and $T$:
\begin{equation}
\label{eq:opt_overYi_}
\min_{\left(\bm{x},z,t\right)\in Y_{0,\bm{Q}}}
{
\bm{\alpha}^{\top}\bm{x}+\beta z+\gamma t
}
\end{equation}
\begin{equation}
\label{eq:opt_overTi_}
\min_{\left(\bm{x},z,t\right)\in T}
{
\bm{\alpha}^{\top}\bm{x}+\beta z+\gamma t
}
\end{equation}
with $\bm{\alpha}\in\mathbb{R}^{n}$, $\beta\in\mathbb{R}$ and 
$\gamma\in\mathbb{R}$.
Obviously if \eqref{eq:opt_overTi_} has an optimal solution 
$\left(\bm{x^*},z^*,t^*\right)$ with 
$z^*\in\{0,1\}$, then it is also an optimal solution for 
\eqref{eq:opt_overYi_}. We then show that whenever 
\eqref{eq:opt_overTi_} admits an optimal solution, there 
exists one with $z$ binary. And if no optimal solution 
exists, then 
both problems \eqref{eq:opt_overYi_}-\eqref{eq:opt_overTi_}  are unbounded. 

We can assume that $\gamma>0$ since \eqref{eq:opt_overTi_} trivially has a binary solution if $\gamma=0$ and $\bm{\alpha}=\bm{0}$, or both problems are unbounded (for any other combination of parameters with $\gamma\leq 0$).  Moreover, by scaling, we can suppose that $\gamma=1$. 
Then, assume that \eqref{eq:opt_overTi_} has an optimal solution 
$\left(\bm{x}^*,z^*,t^*\right)$ with 
$0<z^*<1$. The point $\left(\bm{x}^*,z^*\right)$ 
is an optimal solution of 
\begin{equation}
\label{eq:auxform1_}
\min_{\left(\bm{x},z\right)\in \mathbb{R}^{n}\times [0,1]} 
q\left(\bm{x},z\right)
\end{equation}
with 
\begin{equation}
\label{eq:expression_q_}
q(\bm{x},z)=\bm{\alpha}^{\top}\bm{x}+\beta z+
\left\|\bm{Q}^{\mathrm{1/2}}\bm{x}\right\|_{2}^{2} + 
\frac{\left(1-z\right) \left(\bm{a}^{\top}\bm{x}\right)^2}{1+
z \left\|\bm{Q}^{\mathrm{-1/2}}\bm{a}\right\|_{2}^{2}}.
\end{equation}
Fixing $z$ in \eqref{eq:expression_q_} and using the first order 
optimality conditions, we deduce the following 
expression of an optimal solution $\bm{x}(z)$ of 
$\min_{\bm{x}\in \mathbb{R}^{n}} 
q\left(\bm{x},z\right)$:
\begin{equation}
\label{eq:expression_xz_}
\bm{x}(z)=-\frac{1}{2} \bm{Q}^{\mathrm{-1}} \bm{\alpha} + 
\frac{1-z}{2\left(
1+\left\|\bm{Q}^{\mathrm{-1/2} a}\right\|_{2}^{2}
\right)} 
\bm{Q}^{-1} \bm{a} \bm{a}^{\top} \bm{Q}^{-1} 
\bm{\alpha}.
\end{equation}
Thus, problem \eqref{eq:auxform1_} reduces to 
$
\min_{z\in [0,1]} 
q\left(\bm{x}(z),z\right).
$
Substituting $\bm{x}(z)$ by its expression \eqref{eq:expression_xz_} in 
\eqref{eq:expression_q_}, we obtain 
that $q\left(\bm{x}(z),z\right)$ is a linear function of $z$. 
To be more precise, after computations, we get the following expression. 
\begin{equation}
q\left(\bm{x}(z),z\right)=\beta z -
\frac{1}{4}\left\|\bm{Q}^{\mathrm{-1/2}}\bm{\alpha}\right\|_{2}^{2}+
\frac{ \left(\bm{a}^{\top}\bm{Q}^{\mathrm{-1}}\bm{\alpha}\right)^2}{4\left(
1+\left\|\bm{Q}^{-1/2}\bm{a}\right\|_{2}^{2}\right)}\left(1-z\right). 
\end{equation}
Thus, 
\eqref{eq:auxform1_} admits an optimal solution with 
$z\in\{0,1\}$, concluding the proof. 
\end{proof}

Intuitively, since terms $(1-z)(\bm{a^\top x})^2$ do not admit a good convex reformulation (Proposition~\ref{prop:trivialRelax}), the key is to instead use the \emph{non-convex} reformulation 
$r(\bm{x})=\frac{\left(1-z\right)\left(\bm{a^{\top}x}\right)^2}{1+z\left\|\bm{Q}^{-1/2}\bm{a}\right\|_2^2}$.
To illustrate, consider the case with $n=1$ and $a=1$, that is, 
\begin{align*}
    Y_{0,\lambda} &=
\left\{
\left(x,z,t\right)\in \mathbb{R}\times 
\left\{0,1\right\}\times \mathbb{R} \colon 
t\geq \lambda x^2+
x^2\left(1-z\right) 
\right\},\text{ and}\\
\clconv\left(Y_{0,\lambda}\right)&=
\left\{
\left(x,z,t\right)\in \mathbb{R}
\times \left[0,1\right]\times \mathbb{R} \colon 
t\geq \lambda x^2 + 
\frac{\left(1-z\right)x^2}{1+z/\lambda}
\right\},
\end{align*}
where $\lambda>0$ is a parameter that controls the magnitude of the quadratic term. Figure~\ref{fig:graph} (top) depicts the graphs of the convex envelopes $t= \lambda x^2 + 
\frac{\left(1-z\right)x^2}{1+z/\lambda}$ for various values of $\lambda$. Moreover, Figure~\ref{fig:graph} (bottom) depicts the graphs of the non-convex reformulation $r= \frac{\left(1-z\right)x^2}{1+z/\lambda}$ for the associated values of $\lambda$. Note that $r$ can also be interpreted as the quantity added to the relaxation induced by big-M relaxations such as \eqref{eq:lts_qmip}, which discard terms associated with $x^2(1-z)$ altogether. We observe that larger improvements over big-M relaxations are achieved for larger values of parameter $\lambda$.

\begin{figure}[!ht]
	\centering
\subfloat[\texttt{$t= 0.1 x^2 + 
\frac{\left(1-z\right)x^2}{1+z/0.1}$}]{\includegraphics[width=0.33\textwidth,trim={13cm 5.5cm 13cm 5.5cm},clip]{./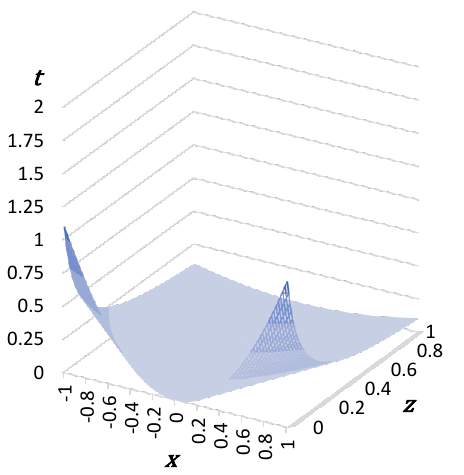}}\hfill\subfloat[\texttt{$t= 0.5 x^2 + 
\frac{\left(1-z\right)x^2}{1+z/0.5}$}]{\includegraphics[width=0.33\textwidth,trim={13cm 5.5cm 13cm 5.5cm},clip]{./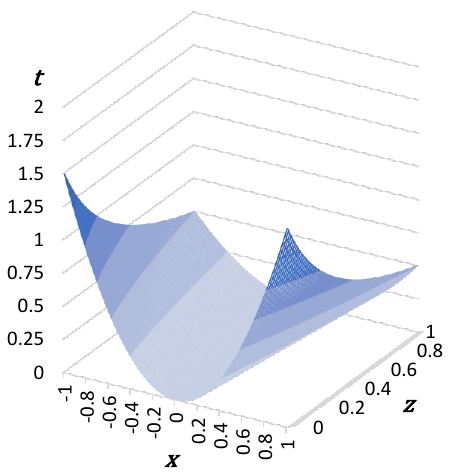}}\hfill	\subfloat[\texttt{$t= 1.0 x^2 + 
\frac{\left(1-z\right)x^2}{1+z/1.0}$}]{\includegraphics[width=0.33\textwidth,trim={13cm 5.5cm 13cm 5.5cm},clip]{././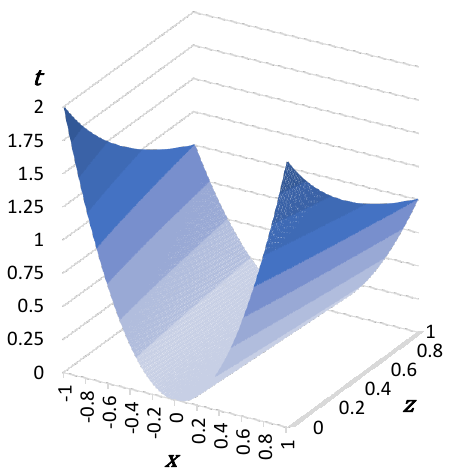}}\hfill\newline 
\subfloat[\texttt{$r=\frac{\left(1-z\right)x^2}{1+z/0.1}$}]
{\includegraphics[width=0.33\textwidth,trim={13cm 5.5cm 13cm 5.5cm},clip]{./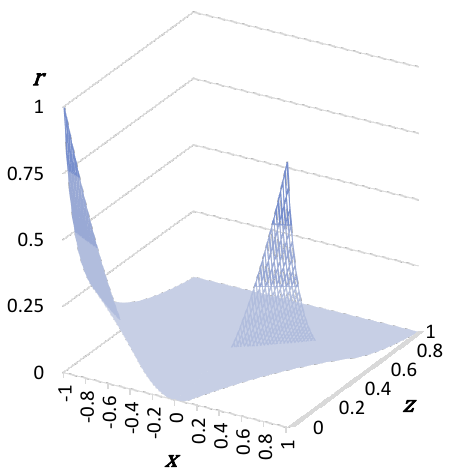}}\hfill\subfloat[\texttt{$r=\frac{\left(1-z\right)x^2}{1+z/0.5}$}]{\includegraphics[width=0.33\textwidth,trim={13cm 5.5cm 13cm 5.5cm},clip]{./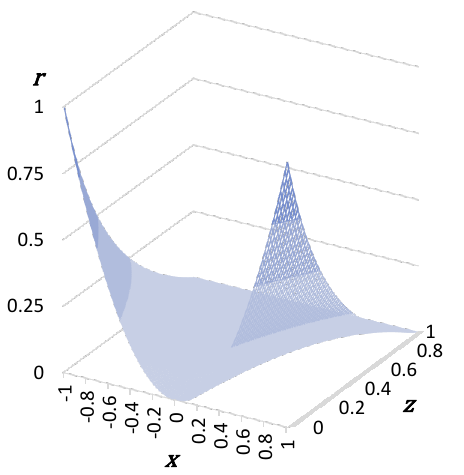}}\hfill	\subfloat[\texttt{$r=\frac{\left(1-z\right)x^2}{1+z/1.0}$}]{\includegraphics[width=0.33\textwidth,trim={13cm 5.5cm 13cm 5.5cm},clip]{./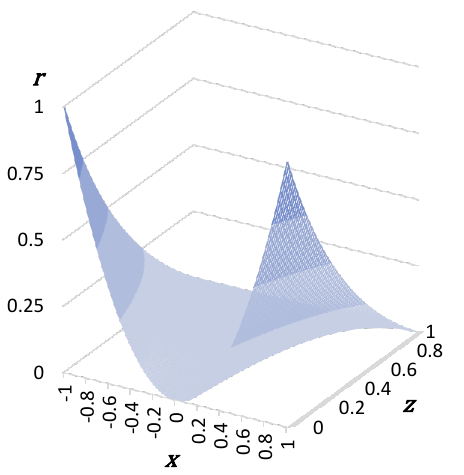}}\hfill\newline 
	\caption{\small Graphs of the convex envelopes $t= \lambda x^2 + 
\frac{\left(1-z\right)x^2}{1+z/\lambda}$ (top) and the non-convex reformulation $r=
\frac{\left(1-z\right)x^2}{1+z/\lambda}$ (bottom) for $\lambda\in \{0.1,0.5,1.0\}$. Note that while the reformulation induced by $r$ is non-convex, the convex envelope is due the strict convexity of term $\lambda x^2$.}
	\label{fig:graph}
\end{figure}

\subsection{
Convexification for the general case}
We now consider the non-homogeneous case  where $c\neq 0$. We could not establish a simple description of $\clconv (Y_{c,\bm{Q}})$ in the original space of variables. Moreover, while relaxations of $Y_{c,\bm{Q}}$ can be derived from Proposition~\ref{prop:convYi_} by writing $Y_{c,\bm{Q}}=
\left\{
\left(x_0,\bm{x},z,t\right)\in \mathbb{R}^{n+1}\times 
\left\{0,1\right\}\times \mathbb{R} \colon 
t\geq \bm{x}^{\top}\bm{Q}\bm{x}+
\left(cx_0-\bm{a}^{\top}\bm{x}\right)^2\left(1-z\right), x_0=1 
\right\}$, we found in preliminary computations that the resulting convexifications (which do not account for constraint $x_0=1$) could be much weaker. Fortunately, as we show in this section, $\clconv(Y_{c,\bm{Q}})$ admits an easy representation with the introduction of an additional variable. 

Observe that set $Y_{c,\bm{Q}}$ can be written as projection onto the $(\bm{x},z,t)$ space of \begin{align*}
\hat Y_{c,\bm{Q}} =
\Big\{
\left(\bm{x},z,t,w\right)\in \mathbb{R}^{n}\times 
\left\{0,1\right\}\times \mathbb{R}^2 \colon &
t\geq \bm{x}^{\top}\bm{Q}\bm{x}+
\left(c+w-\bm{a}^{\top}\bm{x}\right)^2,\\ &w(1-z)=0
\Big\}.
\end{align*}
Indeed, if $z=0$, then $w=0$ and $ Y_{c,\bm{Q}}$ and $ \hat Y_{c,\bm{Q}}$ coincide. On the other hand, if $(\bm{x},1,t)\in Y_{c,\bm{Q}}$, then $(\bm{x},1,t,\bm{a^\top x}-c)\in \hat Y_{c,\bm{Q}}$. We now characterize $\text{cl conv}(\hat Y_{c,\bm{Q}})$. Let $\bm{L}\in \R^{n\times n}$ be any matrix such that $\bm{LL^\top}=\left(\bm{Q}+\bm{aa^\top}\right)^{-1}$, obtained for example from a Cholesky decomposition.

\begin{theorem} \label{prop:convYc} The closure of the convex hull of set $\hat Y_{c,\bm{Q}}$ is
\begin{align*}
&\clconv\left(\hat Y_{c,\bm{Q}}\right)=
\Bigg\{
\left(\bm{x},z,t,w\right)\in \mathbb{R}^{n}
\times \left[0,1\right]\times \mathbb{R}^2 \colon\\
&\;\;t\geq c^2+2c(w-\bm{a^\top x})
+\left\|\bm{L^{-1}}\left(\bm{x}-\frac{\bm{Q^{-1}a}}{1+\|\bm{Q}^{-1/2}\bm{a}\|_{2}^{2}}w\right)\right\|_2^2+\frac{w^2}{\left(1+\|\bm{Q^{-1/2}a}\|_2^2\right)z}
\Bigg\}.
\end{align*}
\end{theorem}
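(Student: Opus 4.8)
The plan is to follow the three-part template used in the proof of Proposition~\ref{prop:convYi_}: letting $T$ denote the set on the right-hand side, I will show that $T$ is closed and convex, that it contains $\hat Y_{c,\bm{Q}}$ (whence $\clconv(\hat Y_{c,\bm{Q}})\subseteq T$), and finally that optimizing an arbitrary linear functional over $T$ always admits an optimizer with $z\in\{0,1\}$ (whence $T\subseteq\clconv(\hat Y_{c,\bm{Q}})$). Throughout I write $\kappa=1+\|\bm{Q}^{-1/2}\bm{a}\|_2^2$, and I record that, since $\bm{L}\bm{L}^\top=(\bm{Q}+\bm{a}\bm{a}^\top)^{-1}$, we have $\bm{L}^{-\top}\bm{L}^{-1}=(\bm{L}\bm{L}^\top)^{-1}=\bm{Q}+\bm{a}\bm{a}^\top$, so that $\|\bm{L}^{-1}\bm{v}\|_2^2=\bm{v}^\top(\bm{Q}+\bm{a}\bm{a}^\top)\bm{v}$ for every $\bm{v}$.

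The computational backbone is a single ``Cholesky completion'' identity: for all $(\bm{x},w)$,
\begin{equation*}
\bm{x}^\top\bm{Q}\bm{x}+(c+w-\bm{a}^\top\bm{x})^2
= c^2+2c(w-\bm{a}^\top\bm{x})
+\left\|\bm{L}^{-1}\!\left(\bm{x}-\tfrac{\bm{Q}^{-1}\bm{a}}{\kappa}\,w\right)\right\|_2^2
+\frac{w^2}{\kappa}.
\end{equation*}
I would verify this by expanding the squared norm as $(\cdot)^\top(\bm{Q}+\bm{a}\bm{a}^\top)(\cdot)$ and using $(\bm{Q}+\bm{a}\bm{a}^\top)\bm{Q}^{-1}\bm{a}=\kappa\bm{a}$ together with $\bm{a}^\top\bm{Q}^{-1}(\bm{Q}+\bm{a}\bm{a}^\top)\bm{Q}^{-1}\bm{a}=(\kappa-1)\kappa$; the cross terms collapse to $-2w\bm{a}^\top\bm{x}$ and the $w^2$ coefficients sum to $\tfrac{\kappa-1}{\kappa}+\tfrac1\kappa=1$. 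This identity delivers both convexity and the relaxation property at once. For convexity, the defining inequality of $T$ reads $t\ge(\text{affine})+\|\bm{L}^{-1}(\cdots)\|_2^2+\tfrac{w^2}{\kappa z}$, a sum of an affine term, a squared Euclidean norm, and the perspective of $w\mapsto w^2/\kappa$; the latter is jointly convex for $z>0$ (equivalently it is the rotated cone $w^2\le\kappa z\,\tau$), so $T$ is closed and convex. For containment, setting $z=1$ turns the last term into $\tfrac{w^2}{\kappa}$ and the identity collapses the right-hand side exactly to $\bm{x}^\top\bm{Q}\bm{x}+(c+w-\bm{a}^\top\bm{x})^2$, while $z=0$ forces $w=0$ (closure of the perspective) and recovers $\bm{x}^\top\bm{Q}\bm{x}+(c-\bm{a}^\top\bm{x})^2$. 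Thus $T$ and $\hat Y_{c,\bm{Q}}$ coincide on the slices $z\in\{0,1\}$, giving $\hat Y_{c,\bm{Q}}\subseteq T$ and, by closedness and convexity, $\clconv(\hat Y_{c,\bm{Q}})\subseteq T$.

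For the reverse inclusion I would minimize $\bm{\alpha}^\top\bm{x}+\beta z+\gamma t+\delta w$ over $T$ and argue, as in Proposition~\ref{prop:convYi_}, that one may take $\gamma=1$ (the cases $\gamma\le 0$ reduce to unboundedness or to a trivially attained binary optimum). Fixing $z\in(0,1)$ and eliminating $t$, the objective becomes a strictly convex function of $(\bm{x},w)$; I would solve its first-order conditions in closed form to obtain minimizers $\bm{x}(z),w(z)$, substitute back, and read off the value function $g(z)$. The crux is to show that $g$ is \emph{affine} in $z$ on $[0,1]$ --- exactly as the analogous substitution produced a linear function of $z$ in Proposition~\ref{prop:convYi_} --- so that its minimum over $[0,1]$ is attained at an endpoint $z\in\{0,1\}$, yielding a point of $\hat Y_{c,\bm{Q}}$ and hence $T\subseteq\clconv(\hat Y_{c,\bm{Q}})$.

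I expect this final step --- the two-block inner minimization over $(\bm{x},w)$ and the verification that $g$ is affine in $z$ --- to be the main obstacle, since the extra variable $w$ and the perspective denominator make the elimination and simplification markedly heavier than in the homogeneous case; the clean cancellations again hinge on $(\bm{Q}+\bm{a}\bm{a}^\top)\bm{Q}^{-1}\bm{a}=\kappa\bm{a}$ and $\bm{L}^{-\top}\bm{L}^{-1}=\bm{Q}+\bm{a}\bm{a}^\top$. As an alternative route that avoids the value-function computation, one can exhibit the convex combination explicitly: for $(\bm{\bar x},\bar z,\bar t,\bar w)\in T$ with $0<\bar z<1$, set $w_1=\bar w/\bar z$, split $\bm{\bar x}=(1-\bar z)\bm{x_0}+\bar z\bm{x_1}$ optimally, and write the point as $\bar z$ times a $z=1$ member of $\hat Y_{c,\bm{Q}}$ plus $(1-\bar z)$ times a $(z=0,w=0)$ member, passing to the limit $z\to0^+$ to cover the boundary; the optimal split reproduces precisely the right-hand side of $T$.
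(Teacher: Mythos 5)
Your proposal is correct, but it proves the theorem by a genuinely different route than the paper. The paper does not argue from first principles in the original space: it invokes a convex-hull characterization from the literature \cite{WEI22} to write $\clconv(\hat Y_{c,\bm{Q}})$ as an SDP-representable extended formulation with a matrix variable $\bm{W}=(1-z)\bm{Q_0^\dagger}+z\bm{Q_1^\dagger}$, computes the two pseudoinverses explicitly, applies the SOCP-representability of conic combinations of PSD matrices \cite{NESTEROV94} to replace the semidefinite constraint by rotated-cone constraints, and finally projects out the auxiliary variables $(\tau_1,\tau_2,s,\bm{u})$. You instead generalize the paper's own proof of Proposition~\ref{prop:convYi_} to the non-homogeneous case: your completion-of-squares identity (which is correct, and is essentially the content of \eqref{eq:QPform} combined with the factorization $\bm{LL^\top}=(\bm{Q}+\bm{aa^\top})^{-1}$) immediately yields convexity of $T$ and the coincidence of $T$ with $\hat Y_{c,\bm{Q}}$ on the slices $z\in\{0,1\}$, and tightness is established by linear optimization or by explicit convex combinations. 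Both of your deferred ``crux'' computations do close. For the value-function route: with $\mu=(1-z)/(\kappa z)$, the inner minimization over $(\bm{x},w)$ is a strictly convex quadratic with Hessian $2\bigl(\bm{Q_1}+\mu\,\bm{e_{n+1}}\bm{e_{n+1}^\top}\bigr)$, and Sherman--Morrison together with $(\bm{Q_1^{-1}})_{n+1,n+1}=\kappa$ gives $1+\kappa\mu=1/z$, so the optimal value is indeed affine in $z$. The explicit-split route is even cleaner and worth preferring: setting $\bm{u_1}=\bm{L^{-1}}\bigl(\bm{x_1}-\tfrac{\bm{Q^{-1}a}}{\kappa}\tfrac{\bar w}{\bar z}\bigr)$ and $\bm{u_0}=\bm{L^{-1}}\bm{x_0}$, your identity reduces the slice computation to
\begin{equation*}
\min\Bigl\{\bar z\|\bm{u_1}\|_2^2+(1-\bar z)\|\bm{u_0}\|_2^2\;:\;\bar z\bm{u_1}+(1-\bar z)\bm{u_0}=\bm{\bar u}\Bigr\}=\|\bm{\bar u}\|_2^2,
\end{equation*}
which is Jensen's inequality with equality at $\bm{u_1}=\bm{u_0}=\bm{\bar u}$; this even shows that every point of $T$ with fractional $z$ lies in $\conv(\hat Y_{c,\bm{Q}})$, not merely its closure. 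As to what each approach buys: the paper's derivation produces the extended SOCP system \eqref{eq:SOCP_system} as a by-product --- the formulation actually handed to solvers, and the source of Remark~\ref{rem:perspective} --- and adapts to other settings by swapping the cited hull theorem; your argument is elementary and self-contained (no pseudoinverses, no SDP machinery, no external hull result), though its cancellations lean heavily on the rank-one structure $\bm{Q}+\bm{aa^\top}$. One point to make explicit in a final write-up, which you only mention in passing: at $z=0$ the term $w^2/(\kappa z)$ must be interpreted as the closed perspective (equal to $0$ if $w=0$ and $+\infty$ otherwise); this convention is what makes $T$ closed and forces $w=0$ on the $z=0$ slice, and the paper's statement carries the same implicit convention.
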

\begin{proof}
In the proof, first we compute $\clconv\left(\hat Y_{c,\bm{Q}}\right)$ in an SDP-representable extended formulation, then we simplify to a lower-dimensional SOCP-representable set, and finally we project out all additional variables.

\paragraph{SDP-representable formulation}
    Observe that \begin{equation}\label{eq:QPform}\bm{x}^{\top}\bm{Q}\bm{x}+
\left(c+w-\bm{a}^{\top}\bm{x}\right)^2=c^2+2c\left(w-\bm{a}^{\top}\bm{x}\right) + 
\left(\bm{x}^{\top}\ w\right) \bm{Q_1}
\begin{pmatrix}
\bm{x} \\
w
\end{pmatrix}\end{equation}
with $\bm{Q_1}=
\left(
\renewcommand{\arraystretch}{1.2}
\begin{array}{l|l}
\bm{Q+a a^{\top}} & -\bm{a}\\ 
\hline 
-\bm{a^{\top}} & 1
\end{array}
\right).
$ Define $
\bm{Q_0}=\left(
 \renewcommand{\arraystretch}{1.2}
\begin{array}{l|l}
\bm{Q+a a^{\top}}  & \bm{0}\\ \hline
\bm{0^{\top}} & 0
\end{array}
\right)
$. Then a description of $\clconv\left(\hat Y_{c,\bm{Q}}\right)$ in an extended formulation is  \cite{WEI22}
\begin{equation}\label{eq:hullSDP}
\begin{aligned}
\clconv\left(\hat Y_{c,\bm{Q}}\right)=
\Big\{
\left(\bm{x},z,t,w\right)\in \mathbb{R}^{n+3}
 \colon& \exists \bm{W}\in \R^{(n+1)\times (n+1)},\tau\in \R \text{ s.t. }\\
&t\geq c^2+2c\left(w-\bm{a}^{\top}\bm{x}\right) + 
\tau\\
&\left(\begin{array}{c c c}
\tau & \bm{x^\top}&w \\
\bm{x} & \multicolumn{2}{c}{\multirow{2}{*}{$\bm{W}$}}\\
w&\multicolumn{2}{c}{}\\
\end{array}\right)\succeq 0\\
&(z,\bm{W})\in \conv(P)
\Big\},
\end{aligned}
\end{equation}
where $P=\left\{(0,\bm{Q_0^\dagger}), (1,\bm{Q_1^\dagger}) \right\}$ and $\bm{Q_i}^\dagger$ denotes the pseudoinverse of $\bm{Q_i}$. Clearly, $\conv(P)=\{(z,\bm{W})\in [0,1]\times \R^{(n+1)\times (n+1)}: \bm{W}=(1-z)\bm{Q_0^\dagger}+z\bm{Q_1^\dagger}\}.$ 

\paragraph{SOCP-representable formulation}
Note that expressions of $\bm{Q_0^\dagger}$ and $\bm{Q_1^\dagger}$ can be easily computed \cite{LU2002119}:
\begin{align*}
&\bm{Q_0^\dagger}=\left(\begin{array}{l|l}
\left(\bm{Q+a a^{\top}}\right)^{-1} & \bm{0}\\ \hline
\bm{0^{\top}} & 0
\end{array}\right)=\left(
\begin{array}{c|c}
\bm{Q}^{-1}- \frac{\bm{Q}^{-1}\bm{a a^{\top}} \bm{Q}^{-1}}{1+\|\bm{Q}^{-1/2}\bm{a}\|_{2}^{2}}
 & \bm{0}\\
 \\
\hline\\
\bm{0}^{\top}\ &\ 0
\end{array}
\right)\\
&\bm{Q_1^\dagger}=\bm{Q_1^{-1}}=\left(
\begin{array}{c|c}
\bm{Q^{-1}}
 & \  \bm{Q}^{-1}\bm{a}\\
 \\
\hline\\
 \bm{a^{\top}}\bm{Q}^{-1}\ &\  1+\|\bm{Q}^{-1/2}\bm{a}\|_{2}^{2}
\end{array}
\right).
\end{align*}

Therefore, we find that constraint $\bm{W}=(1-z)\bm{Q_0^\dagger}+z\bm{Q_1^\dagger}$ simplifies to 
\begin{align*}
\bm{W}
&=
\left(
\begin{array}{c|c}
\bm{Q}^{-1}- \frac{\bm{Q}^{-1}\bm{a a^{\top}} \bm{Q}^{-1}}{1+\|\bm{Q}^{-1/2}\bm{a}\|_{2}^{2}}
 & \bm{0}\\
 \\
\hline\\
\bm{0}^{\top}\ &\ 0
\end{array}
\right)+
z\left(
\begin{array}{c|c}
\frac{\bm{Q}^{-1}\bm{a a^{\top}} \bm{Q}^{-1}}{1+\|\bm{Q}^{-1/2}\bm{a}\|_{2}^{2}}
 & \  \bm{Q}^{-1}\bm{a}\\
 \\
\hline\\
 \bm{a^{\top}}\bm{Q}^{-1}\ &\  1+\|\bm{Q}^{-1/2}\bm{a}\|_{2}^{2}
\end{array}
\right)\\ 
\\
&=\bm{U}+ z \bm{vv^{\top}}
\end{align*}
where
\begin{align*}
\bm{U}= 
\left(
\begin{array}{c|c}
\bm{Q}^{-1}- \frac{\bm{Q}^{-1}\bm{a a^{\top}} \bm{Q}^{-1}}{1+\|\bm{Q}^{-1/2}\bm{a}\|_{2}^{2}}
 & \bm{0}\\
 \\
\hline\\
\bm{0}^{\top}\ &\ 0
\end{array}
\right)
\mbox{ and } \bm{v}=
\left(
\begin{array}{l}
\frac{\bm{Q}^{-1}\bm{a}}
{\sqrt{1+\left\|\bm{Q}^{-1/2}\bm{a}\right\|_{2}^{2}}}\\ \\
{\tiny\sqrt{1+\left\|\bm{Q}^{-1/2}\bm{a}\right\|_{2}^{2}} }
\end{array}
\right).
\end{align*}

Moreover, the system \begin{equation}\label{eq:system}\exists \bm{W}\in \R^{(n+1)\times (n+1)}\text{ s.t. }\left(\begin{array}{c c c}
\tau & \bm{x^\top}&w \\
\bm{x} & \multicolumn{2}{c}{\multirow{2}{*}{$\bm{W}$}}\\
w&\multicolumn{2}{c}{}\\
\end{array}\right)\succeq 0,\; \bm{W}=\bm{U}+z\bm{vv^\top}
\end{equation}can be reformulated as an SOCP \cite[p.227-229]{NESTEROV94}. 
Letting $\bm{L}\in\mathbb{R}^{n\times n}$ such that 
$\bm{L L^{\top}}= \left(\bm{Q}+\bm{aa^{\top}}\right)^{-1}$   
(obtained for example from a Cholesky decomposition), 
then point $(\bm{x},z,w)$ satisfies constraints \eqref{eq:system} if and only if there exists $\tau_1,\tau_2\in\mathbb{R}_+$, $s\in\mathbb{R}$ and 
$\bm{u}\in\mathbb{R}^{n}$ such that the constraints
\begin{equation}
\begin{aligned}\label{eq:SOCP_system}
&\tau=  \tau_1+\tau_2\\
&\bm{L u} + \frac{\bm{Q^{-1}a}}{\sqrt{1+\|\bm{Q}^{-1/2}\bm{a}\|_{2}^{2}}}s = 
\bm{x}\\
&s\sqrt{1+\left\|\bm{Q}^{-1/2}\bm{a}\right\|_{2}^{2}} =w\\
&\left\|\bm{u}\right\|_{2}^{2} \leq {\tau_1}\\
&s^2 \leq {\tau_2 z}
\end{aligned}
\end{equation}
are satisfied.

\paragraph{Projection}
In system \eqref{eq:SOCP_system}, we can project out $\tau=\tau_1+\tau_2$, $s=w/\sqrt{1+\left\|\bm{Q}^{-1/2}\bm{a}\right\|_{2}^{2}}$ and $\bm{u}=\bm{L^{-1}}\left(\bm{x}-\frac{\bm{Q^{-1}a}}{1+\|\bm{Q}^{-1/2}\bm{a}\|_{2}^{2}}w\right)$, which by replacing in \eqref{eq:hullSDP} results in the formulation
\begin{equation}\label{eq:hullSOCP}
\begin{aligned}
\clconv\left(\hat Y_{c,\bm{Q}}\right)=
\Big\{
\left(\bm{x},z,t,w\right)\in \mathbb{R}^{n+3}
 \colon& \exists \tau_1,\tau_2\in \R_+ \text{ s.t. }\\
&t\geq c^2+2c\left(w-\bm{a}^{\top}\bm{x}\right) + 
\tau_1+\tau_2\\
&\left\|\bm{L^{-1}}\left(\bm{x}-\frac{\bm{Q^{-1}a}}{1+\|\bm{Q}^{-1/2}\bm{a}\|_{2}^{2}}w\right)\right\|_2^2\leq \tau_1\\
&w^2/\left(1+\|\bm{Q}^{-1/2}\bm{a}\|_{2}^{2}\right)\leq \tau_2 z
\Big\}.
\end{aligned}
\end{equation}
In order to satisfy the first inequality constraint, we can assume that $\tau_1$ and $\tau_2$ are set to their lower bounds, concluding the proof.
\end{proof}

\begin{remark}\label{rem:perspective}
Theorem~\ref{prop:convYc} reveals an interesting connection between $\clconv (\hat Y_{c,\bm{Q}})$ and the perspective reformulation. Indeed, letting $\bm{e_{n+1}}$ denote the $(n+1)$th standard basis vector of $\R^{n+1}$, one can rewrite the quadratic expression in \eqref{eq:QPform}~as
$$\left(\bm{x}^{\top}\ w\right) \bm{Q_1}
\begin{pmatrix}
\bm{x} \\
w
\end{pmatrix}=\delta w^2+\left(\bm{x}^{\top}\ w\right) \left(\bm{Q_1}-\delta \bm{e_{n+1}e_{n+1}^\top}\right)
\begin{pmatrix}
\bm{x} \\
w
\end{pmatrix},$$
where $\delta\geq 0$ and $\bm{Q_1}-\delta\bm{e_{n+1}e_{n+1}^\top}\succeq 0$, and then reformulate term $\delta w^2$ as $\delta w^2/z$. From Theorem~\ref{prop:convYc}, we see that this reformulation is indeed ideal if $\delta$ is maximal, and the theorem provides a closed-form expression for the resulting $\bm{Q_1}-\delta \bm{e_{n+1}e_{n+1}^\top}$ (that depends on the factorization $\bm{LL^\top}$). 
\end{remark}

\section{Application to LTS}
\label{sec:lts}

In this section, we use the convexification results in \S\ref{sec:convex} to obtain conic reformulations of \eqref{eq:LTS}, or equivalently, problem \eqref{eq:LTS_MIOCubic}. 

\subsection{The Big-M formulation}\label{sec:bigM} The starting point for the formulations presented is the big-M formulation
\begin{equation}
\label{eq:lts_bigM}\tag{Big-M}
\begin{aligned}
\min_{\bm{x},\bm{z},\bm{w}} &\sum_{i=1}^{m}\left(y_i+w_i-\bm{a_i^\top x}\right)^2+\lambda\|\bm{Tx}\|_2^2\\
\mathrm{s.t.}\; &\bm{1^\top z}\leq m-h\\
&-M\bm{z}\leq\bm{w}\leq M\bm{z}\\
&\bm{x}\in\mathbb{R}^{n},\; 
\bm{z}\in\{0,1\}^m,\;\bm{w}\in \R^m,
\end{aligned}
\end{equation}
where $M$ is a suitably large number. Observe that while the formulation is different from the original big-M formulation \eqref{eq:lts_qmip} proposed in \cite{ZIOUTAS05}, they are equivalent in terms of strength. Indeed, variables $\bm{u}$ in \eqref{eq:lts_qmip} corresponds to terms $|y_i+w_i-\bm{a_i^\top x}|$ in \eqref{eq:lts_bigM}, and the absolute values of variables $\bm{w}$ in \eqref{eq:lts_bigM} can be interpreted as the slacks associated with constraints $|y_i-\bm{a_i^\top x}|-u_i\leq Mz_i$ in \eqref{eq:lts_qmip}. We point out that formulation \eqref{eq:lts_bigM} was the basis for the solution approach in \cite{insolia_2022_outlier_and_feature} for problems with both outliers and sparsity. Indeed, the authors proposed to directly add constraints of the form $-\bar M \bm{\zeta}\leq \bm{x}\leq \bar M\bm{\zeta}$, $\bm{1^\top \zeta}\leq k$ and $\bm{\zeta}\in \{0,1\}^n$ to \eqref{eq:lts_bigM} -- note that in \cite{insolia_2022_outlier_and_feature}, the regularization term $\lambda\|\bm{Tx}\|_2^2$ appeared in as constraint instead of as a penalty. 

\subsection{The simple conic reformulation}\label{sec:conicSimple}
Observing that the objective of \eqref{eq:lts_bigM} can be written as $$\sum_{i=1}^m\left((y_i+w_i-\bm{a_i^\top x})^2+\frac{\lambda}{m}\|\bm{Tx}\|_2^2\right),$$
we use Theorem~\ref{prop:convYc} to independently reformulate each term in the sum, resulting in the formulation
\begin{equation}\tag{conic}
\label{eq:lts_conic}
\begin{aligned}
\min_{\bm{x},\bm{z},\bm{w}} &\|\bm{y}\|_2^2-2\bm{y^\top}\left( \bm{A x}-\bm{w}\right)+\sum_{i=1}^m\left\|\bm{L_i^{-1}}\left(\bm{x}-\frac{(m/\lambda)(\bm{T^\top T})^{-1}\bm{a}}{1+(m/\lambda)\|(\bm{T^\top T})^{-1/2}\bm{a}\|_{2}^{2}}\cdot w_i\right)\right\|_2^2\\
&\quad +\sum_{i=1}^m\frac{1}{1+(m/\lambda)\|(\bm{T^\top T})^{-1/2}\bm{a}\|_{2}^{2}}\cdot\frac{w_i^2}{z_i}\\
\mathrm{s.t.}\; &\bm{1^\top z}\leq m-h\\
&\bm{x}\in\mathbb{R}^{n},\; 
\bm{z}\in\{0,1\}^m,\;\bm{w}\in \R^m,
\end{aligned}
\end{equation}
where matrices $\bm{L_i}$ satisfy $\bm{L_iL_i^\top}=\left((m/\lambda)\bm{T^\top T}+\bm{a_ia_i^\top}\right)^{-1}$. Formulation \eqref{eq:lts_conic} does not use the big-M constraints $-M\bm{z}\leq \bm{w}\leq M\bm{z}$, as the conic terms $w_i/z_i$ enforce the same logical relationship. Observe that since terms $x_i^2/z_i$ can be reformulated as SOCP-constraints \cite{akturk2009strong}, and every other term is either convex quadratic or linear, formulation \eqref{eq:lts_conic} can be easily used with mixed-integer SOCP solvers.

\subsection{The stronger conic reformulation}\label{sec:conicStronger}

The observation motivating the stronger conic reformulation is that, given any collection of matrices $\{\bm{Q_i}\}_{i=1}^m$ such that $\bm{Q_i}\succ 0$ and $\sum_{i=1}^m\bm{Q_i}=\lambda\bm{T^\top T}$, we can rewrite the objective of \eqref{eq:lts_bigM} as
$$\sum_{i=1}^m\left((y_i+w_i-\bm{a_i^\top x})^2+\bm{x^\top Q_ix}\right)$$ and then apply Theorem~\ref{prop:convYc}. The simple conic reformulation is a special case of such a convexification, with $\bm{Q_i}=(\lambda/m)\bm{T^\top T}$ for all $i\in [m]$, but other choices of collection $\{\bm{Q_i}\}_{i=1}^m$ may result in stronger formulations. We now discuss how to find a collection $\{\bm{Q_i}\}_{i=1}^m$ resulting in better relaxations.

We use the intuition provided in Remark~\ref{rem:perspective} and similar ideas to \cite{dong2015regularization,zheng2014improving} to derive the formulation. Observe that given any collection $\{\bm{Q_i}\}_{i=1}^m$, the relaxation is of the form
\begin{subequations}
\begin{align*}\min_{\bm{x},\bm{z},\bm{w}} &
\left\|\bm{y}\right\|_{2}^{2}-2\bm{y}^{\top}
\left(\bm{Ax}-\bm{w}\right)+\begin{pmatrix}\bm{x^\top} &\bm{w}^\top\end{pmatrix}\bm{\Sigma}\begin{pmatrix}\bm{x}\\ \bm{w}\end{pmatrix} +
\sum_{i=1}^{m}{ d_i\frac{w_{i}^{2}}{z_i}}\\
\nonumber \mbox{s.t. }
& \sum_{i=1}^{m}{z_i}\leq m-h\\
\nonumber &\bm{x}\in\mathbb{R}^{n}, \bm{z}\in [0,1]^m, 
\nonumber \bm{w}\in\mathbb{R}^m,
\end{align*}
\end{subequations}
where $\bm{\Sigma}\succeq 0$ and $\bm{d}\geq \bm{0}$. Moreover, we find that $$\bm{\Sigma}=\begin{pmatrix}\bm{A^\top A}+\lambda \bm{T^\top T}& -\bm{A^\top}\\-\bm{A}&\bm{I}-\text{Diag}(\bm{d})\end{pmatrix}.$$
Thus, the continuous relaxation of the stronger conic reformulation is given by 
\begin{equation}
\label{eq:lts_conicP}\tag{conic+}
\begin{aligned}
\max_{\bm{d},\bm{\Sigma}}&\min_{\substack{(\bm{x},\bm{z},\bm{w})\in \R^{n+2m}\\
\|\bm{z}\|_1\leq m-h\\
\bm{0}\leq\bm{z}\leq \bm{1}}} 
\left\|\bm{y}\right\|_{2}^{2}-2\bm{y}^{\top}
\left(\bm{Ax}-\bm{w}\right)+\begin{pmatrix}\bm{x^\top} &\bm{w}^\top\end{pmatrix}\bm{\Sigma}\begin{pmatrix}\bm{x}\\ \bm{w}\end{pmatrix} +
\sum_{i=1}^{m}{ d_i\frac{w_{i}^{2}}{z_i}}\\
 \mbox{s.t. }
& \bm{\Sigma}=\begin{pmatrix}\bm{A^\top A}+\lambda \bm{T^\top T}& -\bm{A^\top}\\-\bm{A}&\bm{I}-\text{Diag}(\bm{d})\end{pmatrix}\succeq 0\\
&\bm{d}\in \R_+^m,\;\bm{\Sigma}\in\mathbb{R}^{(n+m)\times(n+m)}.
\end{aligned}
\end{equation}
Observe that in formulation \eqref{eq:lts_conicP}, constraints $\bm{z}\in\{0,1\}$ were relaxed to bound constraints, hence it is a relaxation of \eqref{eq:LTS_MIOCubic}. The MIO version corresponds to fixing $\bm{d}$ and $\bm{\Sigma}$ to the optimal values of \eqref{eq:lts_conicP}, and adding back constraints $\bm{z}\in \{0,1\}^m$. 

We now discuss how to compute an optimal solution $\bm{d^*}$ of \eqref{eq:lts_conicP} -- note that $\bm{\Sigma^*}$ is immediately implied from the value of $\bm{d^*}$.
Given any fixed $(\bm{\bar x},\bm{\bar z},\bm{\bar w})\in \R^n\times [0,1]^m\times \R^m$, the choice of $\bm{d}$ that results in the best relaxation for that particular point (i.e., resulting in the largest objective value for that particular point with $\bm{z}$ fractional) is an optimal solution of the semidefinite optimization problem (where we remove terms that do not depend on $\bm{d}$)
\begin{subequations}\label{eq:sdp1}
\begin{align}
    \max_{\bm{d}\in \R_+^m}\;&\sum_{i=1}^m \bar w_i^2\left(\frac{1}{\bar z_i}-1\right)d_i\\
\text{s.t.}\;&\begin{pmatrix}\bm{A^\top A}+\lambda \bm{T^\top T}& -\bm{A^\top}\\-\bm{A}&\bm{I}-\text{Diag}(\bm{d})\end{pmatrix}\succeq 0.\label{eq:sdp1_psd}
\end{align}
\end{subequations}

While convex and polynomial-time solvable, problem \eqref{eq:sdp1} can be difficult to solve, mainly due to the presence of the large-dimensional conic constraint \eqref{eq:sdp1_psd}, on order $(n+m)$ matrices. Fortunately, as Proposition~\ref{prop:sdpReformulation} below shows, problem \eqref{eq:sdp1} can be reformulated using a lower dimensional conic constraint, on order $n$ matrices.

\begin{proposition}\label{prop:sdpReformulation}
If $\bm{A}$ does not contain a row of $0$s and $\bm{u^*}$ is optimal for the optimization problem
\begin{equation}\label{eq:decomposition}
\begin{aligned}
    \min_{\bm{u}\in \R^m}\;&\sum_{i=1}^m\bar w_i^2\left(\frac{1}{\bar z_i}-1\right)\frac{1}{u_i}\\
\text{s.t.}\;&\bm{A^\top A}+\lambda\bm{T^\top T}-\bm{A^\top}\text{Diag}(\bm{u})\bm{A} \succeq 0,\; \bm{u}\geq \bm{1},
\end{aligned}
\end{equation}
then $\bm{d^*}\in \R^m_+$ such that $d_i^*=1-\frac{1}{u_i^*}$ is optimal for \eqref{eq:sdp1}.
\end{proposition}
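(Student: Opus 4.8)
The plan is to eliminate the large $(n+m)\times(n+m)$ conic constraint \eqref{eq:sdp1_psd} via a Schur complement taken with respect to the bottom-right block $\bm{I}-\text{Diag}(\bm{d})$, and then to apply the change of variables $u_i = 1/(1-d_i)$. The hypothesis that $\bm{A}$ has no zero row enters precisely to guarantee that every feasible $\bm{d}$ satisfies $d_i<1$ strictly, so that the bottom-right block is positive definite (hence invertible) and the reparametrization is well defined. Everything else reduces to a routine Schur-complement identity and a monotone substitution.

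First I would show that any $\bm{d}$ feasible for \eqref{eq:sdp1} has $d_i<1$ for all $i$. Since $\bm{d}\geq \bm{0}$ and the $(n+i,n+i)$ diagonal entry of the matrix in \eqref{eq:sdp1_psd} equals $1-d_i$, positive semidefiniteness forces $1-d_i\geq 0$. If $1-d_i=0$ for some $i$, the standard fact that a zero diagonal entry of a positive semidefinite matrix forces the whole corresponding row and column to vanish would imply that the $i$-th row of $-\bm{A}$ is zero, i.e. $\bm{a_i}=\bm{0}$, contradicting the hypothesis. Hence $d_i<1$ and $\bm{I}-\text{Diag}(\bm{d})\succ 0$. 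This is the step I expect to be the crux of the argument, as it is where the no-zero-row assumption is used.

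Next, with $\bm{I}-\text{Diag}(\bm{d})\succ 0$ invertible, the Schur complement characterization gives that \eqref{eq:sdp1_psd} holds if and only if $\bm{A^\top A}+\lambda\bm{T^\top T}-\bm{A^\top}\big(\bm{I}-\text{Diag}(\bm{d})\big)^{-1}\bm{A}\succeq 0$. Writing $u_i=1/(1-d_i)$, equivalently $d_i=1-1/u_i$, we have $\big(\bm{I}-\text{Diag}(\bm{d})\big)^{-1}=\text{Diag}(\bm{u})$, so this constraint becomes exactly $\bm{A^\top A}+\lambda\bm{T^\top T}-\bm{A^\top}\text{Diag}(\bm{u})\bm{A}\succeq 0$, the constraint appearing in \eqref{eq:decomposition}. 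Moreover the map $d_i\mapsto u_i=1/(1-d_i)$ is an increasing bijection from $[0,1)$ onto $[1,\infty)$, so the sign constraint $\bm{d}\geq\bm{0}$ corresponds exactly to $\bm{u}\geq\bm{1}$, and feasible sets are in bijection.

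Finally I would transform the objective. Setting $c_i=\bar w_i^2(1/\bar z_i-1)\geq 0$, the substitution yields $\sum_{i=1}^m c_i d_i=\sum_{i=1}^m c_i-\sum_{i=1}^m c_i/u_i$; since $\sum_i c_i$ is a constant independent of the decision variables, maximizing $\sum_i c_i d_i$ over the feasible set of \eqref{eq:sdp1} is equivalent to minimizing $\sum_i c_i/u_i$ over the feasible set of \eqref{eq:decomposition}. Because the reparametrization is a bijection between the two feasible sets under which the objectives differ only by an additive constant and a sign change, $\bm{u^*}$ optimal for \eqref{eq:decomposition} yields $\bm{d^*}$ with $d_i^*=1-1/u_i^*$ optimal for \eqref{eq:sdp1}, as claimed.
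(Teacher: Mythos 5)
Your proof is correct, and it follows the same overall route as the paper: a Schur complement taken with respect to the bottom-right block of \eqref{eq:sdp1_psd}, followed by the substitution $u_i=1/(1-d_i)$, with the objective transforming by a constant offset and a sign flip. The one place where you genuinely diverge is the treatment of the boundary case $d_i=1$, which is exactly where the no-zero-row hypothesis enters. The paper invokes the \emph{generalized} Schur complement: \eqref{eq:sdp1_psd} is equivalent to $\bm{I}-\text{Diag}(\bm{d})\succeq 0$, the range condition $\left(\bm{I}-\text{Diag}(\bm{d})\right)\left(\bm{I}-\text{Diag}(\bm{d})\right)^\dagger\bm{A}=\bm{A}$, and the pseudoinverse Schur inequality; it then observes that the projection $\left(\bm{I}-\text{Diag}(\bm{d})\right)\left(\bm{I}-\text{Diag}(\bm{d})\right)^\dagger$ zeroes out the $i$-th row of $\bm{A}$ whenever $d_i=1$, so the range condition forces $d_i<1$ unless $\bm{a_i}=\bm{0}$. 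You instead rule out $d_i=1$ up front by the elementary fact that a zero diagonal entry of a positive semidefinite matrix annihilates its entire row and column (a $2\times 2$ principal minor argument), so every feasible $\bm{d}$ satisfies $\bm{I}-\text{Diag}(\bm{d})\succ 0$ and only the ordinary Schur complement for an invertible block is ever needed. Your version buys simplicity --- no pseudoinverses or generalized Schur complements at all --- while the paper's version yields, as a byproduct, the exact feasibility characterization of \eqref{eq:sdp1_psd} including the boundary. Both arguments use the hypothesis on $\bm{A}$ at the same point, and both finish identically via the bijection of feasible sets and the constant-shifted objective.
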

\begin{proof}
From the generalized Schur complement \cite{ALBERT69}, we find that constraint \eqref{eq:sdp1_psd} is equivalent to 
\begin{subequations}\label{eq:schur}
\begin{align}&\bm{I}-\text{Diag}(\bm{d})\succeq 0,\label{eq:schur_1}\\
&\left(\bm{I}-\text{Diag}(\bm{d})\right)\left(\bm{I}-\text{Diag}(\bm{d})\right)^\dagger\bm{A}=\bm{A},\text{ and }\label{eq:schur_2}\\
&\bm{A^\top A}+\lambda\bm{T^\top T}-\bm{A^\top}\left(\bm{I}-\text{Diag}(\bm{d})\right)^{\dagger}\bm{A} \succeq 0.\label{eq:schur_3}
\end{align}
\end{subequations}
Constraint \eqref{eq:schur_1} is equivalent to $\bm{d}\leq \bm{1}$. Constraint \eqref{eq:schur_2} is automatically satisfied if $\bm{d}<\bm{1}$, since in that case matrix $\left(\bm{I}-\text{Diag}(\bm{d})\right)^\dagger =\left(\bm{I}-\text{Diag}(\bm{d})\right)^{-1}$. In general, however, $\Omega=\left(\bm{I}-\text{Diag}(\bm{d})\right)\left(\bm{I}-\text{Diag}(\bm{d})\right)^\dagger$ is the diagonal matrix such that $\Omega_{ii}=\mathbbm{1}_{\{d_i<1\}}$. Therefore, if $d_i=1$, then the $i$-th row of matrix $\Omega A$ is a row of $0$s, and constraint \eqref{eq:schur_2} cannot be satisfied in that case unless the $i$-th row of $\bm{A}$ is also $\bm{0}$. 

Finally, perform a change of variables $u_i=\frac{1}{1-d_i}$, well defined since $d_i<1$ holds. From constraints $\bm{d}\geq 0$ we find $\bm{u}\geq \bm{1}$. Problem \eqref{eq:sdp1} reduces to 
\begin{align*}
    \max_{\bm{u}\in \R^m}\;&\sum_{i=1}^m\left( \bar w_i^2\left(\frac{1}{\bar z_i}-1\right)-\bar w_i^2\left(\frac{1}{\bar z_i}-1\right)\frac{1}{u_i}\right)\\
\text{s.t.}\;&\bm{A^\top A}+\lambda\bm{T^\top T}-\bm{A^\top}\text{Diag}(\bm{u})\bm{A} \succeq 0,\; \bm{u}\geq \bm{1}.
\end{align*}
The result then follows by removing terms in the objective not involving~$\bm{u}$.
\end{proof}

\begin{remark}
The assumption on $\bm{A}$ is almost without loss of generality, since it is invariably satisfied in practice. In the formulation in Proposition~\ref{prop:sdpReformulation}, the nonlinear objective terms can reformulated with the introduction of additional variables $\bm{s}\in \R_+^m$ and rotated cone constraints $1\leq s_iu_i$. The formulation contains a similar number of variables as \eqref{eq:sdp1}, but if $n\ll m$ the nonlinear conic constraints are much simpler, and as a consequence the resulting formulation is substantially faster (and less memory intensive as well). 
\end{remark}

We propose a simple primal-dual method to solve \eqref{eq:lts_conicP}, summarized in Algorithm~\ref{alg:primal_dual}. The algorithm iterates between solving the inner minimization of \eqref{eq:lts_conicP} to optimality (for fixed $\bm{d}$ and $\bm{\Sigma}$), and moving towards the optimal of the outer maximization (for fixed $\bm{x}$, $\bm{z}$ and $\bm{w}$). Each minimization step requires solving an SOCP-representable problem, while  each maximization step requires solving an SDP as outlined in Proposition~\ref{prop:sdpReformulation}. The final MIO can be solved with off-the-shelf mixed-integer SOCP solvers.

\begin{algorithm}[h]
	\caption{\eqref{eq:lts_conicP} algorithm.}
	\label{alg:primal_dual} \small
	\begin{algorithmic}[1]
		\renewcommand{\algorithmicrequire}{\textbf{Input:}}
		\renewcommand{\algorithmicensure}{\textbf{Output:}}
            \State $k\leftarrow 0$ \Comment{Iteration number}
            \State $d_i^0\leftarrow \frac{1}{1+(m/\lambda)\left\|(\bm{T^\top T})^{-1/2}\bm{a_i}\right\|_2^2}$ for $i=1,\dots,m$\Comment{$\bm{d^0}$ from formulation \eqref{eq:lts_conic}}
            \Repeat
            \State $k\leftarrow k+1$
            \State $(\bm{\bar x},\bm{\bar z},\bm{\bar w})\leftarrow$ \texttt{Solve} inner minimization of \eqref{eq:lts_conicP} with $\bm{d}=\bm{d^{k-1}}$ fixed\label{line:relax}
            \State $\bm{d^*} \leftarrow$ \texttt{Solve} \eqref{eq:sdp1} \Comment{Use Proposition~\ref{prop:sdpReformulation}}
            \State $\bm{d^{k}}\leftarrow \bm{d^{k-1}}+\frac{1}{k}\left(\bm{ d^*}-\bm{d^{k-1}}\right)$
            \Until{Termination criterion is met}
            \State $(\bm{x^*},\bm{z^*},\bm{w^*})\leftarrow$ \texttt{Solve} \eqref{eq:lts_conicP} with constraints $\bm{z}\in \{0,1\}^m$ and $\bm{d}=\bm{d^k}$ fixed.
		\State \Return $(\bm{x^*},\bm{z^*},\bm{w^*})$
	\end{algorithmic}
\end{algorithm}

\subsection{Improving relaxations with reliable data}\label{sec:reliable}
In some situations, a decision-maker may have access to data that has been carefully vetted, and is known to be reliable. Obviously, in such situations, such data should not be discarded. Moreover, as we now discuss, it is possible to leverage such data to further improve the relaxations.

Suppose that the first $m_0$ datapoints are known to not contain outliers. Then, \eqref{eq:lts_bigM} simplifies to 
\begin{align*}
\min_{\bm{x},\bm{z},\bm{w}} &\sum_{i=1}^{m_0}\left(y_i-\bm{a_i^\top x}\right)^2+\sum_{i=m_0+1}^{m}\left(y_i+w_i-\bm{a_i^\top x}\right)^2+\lambda\|\bm{Tx}\|_2^2\\
\mathrm{s.t.}\; &\bm{1^\top z}\leq m-h\\
&-M\bm{z}\leq\bm{w}\leq M\bm{z}\\
&\bm{x}\in\mathbb{R}^{n},\; 
\bm{z}\in\{0,1\}^{m-m_0},\;\bm{w}\in \R^{m-m_0}.
\end{align*}
Expanding the error terms of first $m_0$ points, we may rewrite the objective as  
$$\sum_{i=1}^{m_0}\left(y_i^2-2y_i\bm{a_i^\top x}\right)+\sum_{i=m_0+1}^{m}\left(y_i+w_i-\bm{a_i^\top x}\right)^2+\bm{x^\top}\left(\lambda \bm{T^\top T}+\sum_{i=1}^{m_0}\bm{a_ia_i^\top}\right)\bm{x}.$$
In other words, matrix $\lambda \bm{T^\top T}+\sum_{i=1}^{m_0}\bm{a_ia_i^\top}$ can be treated as the ``regularization" matrix and used throughout the conic formulations instead of $\lambda \bm{T^\top T}$, resulting in stronger formulations.

\begin{remark}
    Note that even if no reliable data is available, the ideas here can still be used to improve algorithms. For example, in a branch-and-bound search, at any given node some subset of variables $\bm{z}$ may have been fixed to $\bm{0}$. Thus, we may use the ideas in this subsection to improve the relaxations for the subtree emanating from that node. Doing so, however, would require a large degree of control of the branch-and-bound algorithm, which is not possible for several off-the-shelf branch-and-bound solvers.
\end{remark}
 
\subsection{Intercept}\label{sec:intercept}
The presence of the strictly convex term $\|\bm{Tx}\|_2^2$ is critical for the design of strong convex relaxations 
(Proposition~\ref{prop:trivialRelax}). However, the presence of an intercept variable might hamper the exploitation of the regularization term. Indeed, while the intercept is often subsumed into matrix $\bm{A}$ (as a column of 1s), the regularization term rarely involves the intercept variable. Indeed, writing \eqref{eq:lts_bigM} while making the intercept variable $x_0$ explicit results in 
\begin{equation*}
\begin{aligned}
\min_{x_0,\bm{x},\bm{z},\bm{w}} &\sum_{i=1}^{m}\left(y_i+w_i-x_0-\bm{a_i^\top x}\right)^2+\lambda\|\bm{Tx}\|_2^2\\
\mathrm{s.t.}\; &\bm{1^\top z}\leq m-h\\
&-M\bm{z}\leq\bm{w}\leq M\bm{z}\\
&(x_0,\bm{x})\in\mathbb{R}^{n+1},\; 
\bm{z}\in\{0,1\}^m,\;\bm{w}\in \R^m,
\end{aligned}
\end{equation*}
where the intercept $x_0$ does not appear in term $\|\bm{Tx}\|_2^2$, and thus this term is not strictly convex. Observe that the quadratic objective function is rank-deficient, as the rank of the quadratic function is at most $m+n$, while there are $n+m+1$ variables $(x_0,\bm{x},\bm{w})$. As a consequence the conic formulations may be ineffective, e.g., feasible solutions of optimization \eqref{eq:sdp1} may require $d_i=0$ for at least some index $i$.

We propose three workarounds to resolve the difficulties posed by the intercept. The first one, which is the ideal solution, is to use reliable data as discussed in \S\ref{sec:reliable}: a single datapoint known to be reliable will allow for the application of the conic formulations. The second approach, which is common in practice, is to standardize $\bm{y}$ so that it has $0$-mean, and fix $x_0=0$. In other words, fix the intercept to be the mean of the response variable. The third approach is to artificially create a strictly quadratic term involving the intercept as a regularization. In particular, given a baseline value $\bar c_0$ for the intercept (e.g., obtained from a heuristic solution), add a regularization term $\lambda (x_0-\bar c_0)^2$
to the objective, penalizing departure of $x_0$ from this baseline. Naturally, the addition of this regularization may prevent the resulting formulation from finding optimal solutions of \eqref{eq:LTS_MIOCubic}. Nonetheless, in our computations, we found that the solutions obtained from the conic formulations are still high quality even if the baseline value $\bar c_0$ is  poorly chosen.

\section{Computations}
\label{sec:computations}

In this section, we discuss computations with synthetic and real data. First, in \S\ref{sec:comp_methods}, we discuss the different methods compared. Then in \S\ref{sec:summary} we provide a high level summary of our computational results, in \S\ref{sec:comp_synt} we discuss experiments with synthetic data, used to validate the statistical merits of the approach, and in \S\ref{sec:comp_real} we provide experiments with real datasets. 

\subsection{Methods tested}\label{sec:comp_methods}

We compare the three MIO formulations presented in \S\ref{sec:lts} for regression problems with outliers. 
\begin{description}
\item[\texttt{big-M}] The big M method, as discussed in \S\ref{sec:bigM}. 
\item[\texttt{conic}] The simple conic reformulation, as discussed in \S\ref{sec:conicSimple}.
\item[\texttt{conic+}] The stronger conic reformulation, as discussed in Algorithm~\ref{alg:primal_dual} in \S\ref{sec:conicStronger}.
\ignore{
\item[\texttt{lqs-mio}] The \texttt{MILO} formulation for the least quantile of squares (\texttt{LQS}) given in \cite{Bertsimas:2014:LQR}. Recall that the \texttt{LQS} minimizes the $h$-th order statistic, i.e., solves the optimization problem
\begin{equation}\label{eq:LQS}
\min_{\bm{x}\in\mathbb{R}^n}
{
r_{(h)}\left(\bm{x}\right)^2
}
\end{equation}
where $\left|r_{(1)}\left(\bm{x}\right)\right|\geq 
\left|r_{(2)}\left(\bm{x}\right)\right|\geq
\ldots\geq 
\left|r_{(m)}\left(\bm{x}\right)\right|$ are the residuals sorted in nonincreasing magnitude order. The formulation, which is based on big-M constraints, is described in Appendix~\ref{sec:lqs}.}
\end{description}

In addition, we also compare the following commonly used methods.
\begin{description}
\item[\texttt{ls+l2}] Simply solving problem \eqref{eq:OLS}, as described in \S\ref{sec:pbs_description}, without accounting for outliers.
\item[\texttt{lad}] Solving the least absolute deviation problem \eqref{eq:lad}.
\item[\texttt{alt-opt}] Heuristic that alternates between optimizing regression coefficients $\bm{x}$ given a fixed set of $m-h$ discarded observations (by fitting an \texttt{ls+l2} regression), and optimizes which $m-h$ observations to discard (encoded by $\bm{z}$) given fixed regression coefficients (a process called a C-step in \cite{ROUSSEEUW06}). We set the initial regression coefficients to be those obtained from \eqref{eq:OLS}.
\end{description}

We point out that we also attempted to implement the MIO formulation of \cite{Bertsimas:2014:LQR} for least quantile regression. However that formulation, which includes three different sets of big-M constraints, resulted in numerical issues in most of the instances (with the solver producing ``optimal" solutions that are not feasible for the MIO, and are extremely poor estimators). In any case, as mentioned in \S\ref{sec:pbs_description}, the authors in \cite{Bertsimas:2014:LQR} comment that the solutions produced by the MIO formulation are much worse than heuristic solutions (unless warm-started with such solutions, in which case the quality of solutions produced by MIO matches the heuristics). 

In all cases we use the ridge regularization $\bm{T}=\bm{I}$.
We use Gurobi solver 9.5 to solve all (mixed-integer) linear, quadratic or second order cone optimization problems, and solver Mosek 10.0 to solve SDPs. All computations are done on a laptop with a 12th Gen Intel Core i7-1280 CPU and 32 GB RAM. We set a time limit of 10 minutes for all methods (for \texttt{conic+}, this time includes both solving SDPs and a MIO), and use the default configuration of the solvers in all cases. All instances are standardized, that is, the model matrix $\bm{A}$ is translated and scaled so that $\sum_{i=1}^m A_{ij}=0$ and $\sum_{i=1}^m A_{ij}^2=1$ for all $j\in [n]$; similarly, $\sum_{i=1}^m y_{i}=0$ and $\sum_{i=1}^m y_{i}^2=1$.

\subsubsection{Implementation details and numerical considerations} We now discuss how we select and tune parameters for the different methods, as well as discuss potential issues if the parameters are poorly chosen. 

\paragraph{\texttt{big-M}} Formulation \eqref{eq:lts_bigM} depends on the parameter $M$. If a small value is chosen, then the formulation might remove optimal solutions. If the value chosen is too large, then numerical issues can be encountered: for example, the solver might set $z_j=10^{-5}$ for some $j$ (which is interpreted as $0$ due to numerical precision of solvers) but set $w_j$ to a large value, while satisfying constraint $|w_j|\leq Mz_j$. In our experiments we set $M=1,000$, and we did not observe any numerical issue in our experiments. This parameter was not tuned.

\paragraph{\texttt{conic}} Formulation \eqref{eq:lts_conic} does not involve any parameter, however it requires to have $\lambda>0$ and may result in incorrect behavior if $\lambda\to 0$. Moreover, based on past experience by the authors, mixed-integer SOCP formulations may result in poor performance or numerical difficulties in large problems. Our experiments satisfy $\lambda\geq 0.01$ and we did not observe any numerical issues. 

To handle the intercept (recall the discussion in \S\ref{sec:intercept}), we tested both fixing $x_0=0$, or using the intercept $\bar x_0$ produced by \texttt{ls+l2} as a proxy, and adding the regularization term $\lambda (\bar x_0-x_0)^2$ to the objective (where $\lambda$ is the same coefficient as the one appearing in term $\lambda\|\bm{x}\|_2^2).$ We did not observe major differences between the two approaches, in terms of quality or solution times. In our experiments with real data we set $x_0=0$, and in our experiments with synthetic data we use the value of \texttt{ls+l2} as a proxy (not that the synthetic experiment includes the instances where \texttt{ls+l2} performs worse).

\paragraph{\texttt{conic+}} As the most sophisticated formulation, there are several implementation details associated with method \texttt{conic+}. First, note that if an optimal solution of problem \eqref{eq:decomposition} satisfies $u_i^*=1$ for some index $i\in [m]$, then $d_i^*=0$ and formulation \eqref{eq:lts_conicP} does not include term $w_i^2/z_i$ (thus the MIO formulation is not exact, but rather a relaxation). Thus, in our computations, we set a constraint $u_i\geq 1.001$. We did not tune this lower bound, although we noted that simply setting $u_i\geq 1$ does indeed result in incorrect results. 

We now discuss the termination criterion of Algorithm~\ref{alg:primal_dual}. Note that at each iteration, at line~\ref{line:relax} of the algorithm, a lower bound on the optimal objective value of \eqref{eq:LTS_MIOCubic} is computed. Moreover, given the solution to the relaxation $(\bm{\bar x}, \bm{\bar z},\bm{\bar w})$, we can compute an upper bound on the optimal objective value using a rounding heuristic, by setting $z_j=1$ for indexes corresponding to the $m-h$ largest values of $\bm{z}$, and then solving \eqref{eq:LTS_MIOCubic} with $\bm{z}$ fixed. Neither the sequence of lower and upper bounds produced by the algorithm is guaranteed to be monotonic, so we track the best lower (LB) and upper bound (UB) found throughout all previous iterations, and compute an optimality gap at any given iteration as $\text{gap}=(UB-LB)/UB$. Finally, we stop the algorithm after 20 iterations (not necessarily consecutive) in which the  gap improvement from one iteration to the next is less than $10^{-6}$. The parameters $(20, 10^{-6})$ were tuned minimally, based on one synthetic instance and one real instance with the alcohol dataset (and on these datasets we did not observe major differences for different choices of parameters).  

In terms of numerical difficulties, in addition to those already mentioned for the \texttt{conic} formulation, method \texttt{conic+} requires solving several SDPs with low-dimensional cones. Certainly, SDPs are inherently more difficult than quadratic or conic quadratic problems, more sensitive to the input data and more prone to numerical instabilities. For example, we observed that if the raw data $(\bm{A},\bm{y})$ is used without standardization, the SDP solver encounters numerical difficulties in several of the instances. In our experiments, with standardized data, we did encounter numerical issues in a single instance (out of over 400). The intercept is handled similarly to \texttt{conic}. 

\ignore{\paragraph{\texttt{lqs-mio}} The MIO formulation proposed in \cite{Bertsimas:2014:LQR} uses three sets of big-M constraints and, similar to method \texttt{big-M}, is susceptible to error if the big-M values are poorly chosen. We use the approach suggested in \cite{Bertsimas:2014:LQR} to handle the big-M constraint, by modeling them as SOS1 constraints (which essentially lets the solver decide the big-M value to be used). We often encountered numerical issues (discussed in detail in \S\ref{sec:comp_synt}), with the solver returning solutions which it claims as optimal, but are in fact infeasible for the MIO. }

\subsubsection{A note on additional improvements} 

We point out that all methods presented here can be further improved. For example, one might use the solution of any of the heuristic methods as warm start for the MIO, and after run heuristic \texttt{alt-opt} starting from the solution produced by the MIO (assuming the time limit was reached, in which case the solution of the MIO might not be optimal), which will produce a solution that is at least as good as the solutions obtained from either solving the MIO or using heuristic \texttt{alt-opt} independently. We do suggest practitioners to use such improvements in practice.
However, we point out that our objective in the paper is not to propose an algorithm that is ``best" for the LTS problem, but rather to evaluate the strength of the conic formulations presented (which, as pointed out in \S\ref{sec:pbs_description}, might be used as building blocks for other optimization problems). By not including additional improvements, we ensure that the differences in computational performance between the MIO methods is entirely due to the formulations used.

\subsection{Summary of results}\label{sec:summary}

We first provide a summary of the results in our computations.

$\bullet$ In computations with both synthetic and real datasets, formulations \texttt{conic} and \texttt{conic+} are orders-of-magnitude faster than \texttt{big-M}. 

$\bullet$ In computations with both synthetic and real datasets, heuristic \texttt{alt-opt} is very fast and delivers optimal solutions in a good proportion of instances, but produces extremely poor solutions in the remaining instances (often worse than  solutions obtained by not handling outliers at all). The exact formulation \texttt{conic+} consistently produces high-quality solutions (even in instances that are not solved to optimality).

$\bullet$ In our computations, formulation \texttt{conic+} solves all synthetic instances in a few seconds --including instances with $(n,m)\in \{20,500\}$-- but fails to solve within the time limit real instances with $(n,m)\in \{10,60\}$. Therefore, we advocate  for departing from the common practice in the statistical and machine learning literature to use synthetic instances to evaluate the scalability of exact methods for \eqref{eq:LTS} or related problems. 

\subsection{Experiments with synthetic data}\label{sec:comp_synt}

We now discuss experiments with synthetic data. First we discuss the instance generation process in \S\ref{sec:synt_gen} and the relevant metrics in \S\ref{sec:synt_metrics}, then we present computational performance in \S\ref{sec:synt_time} and statistical results in \S\ref{sec:synt_res}. We point out that the  
focus in this section is  the statistical performance of the different methods.  

\subsubsection{Instance generation}\label{sec:synt_gen}

Our instance generation process follows closely the generation process in \cite{Bertsimas:2014:LQR}, which in turn was inspired by \cite{ROUSSEEUW06}. Given parameters $n$ and $m$, each entry of the matrix $\bm{A}$ is generated iid as $A_{ij}\sim \mathcal{N}(0,100)$. Moreover, we generate a ``ground-truth" vector $\bm{x}^*=\bm{1}$, and 
responses as $\bm{y}=\bm{Ax}+\bm{\epsilon}$, where each entry of $\bm{\epsilon}$ is generated iid as $\epsilon_i\sim\mathcal{N}(0,10)$.  
Given a proportion $\tau$ of outliers, we randomly choose $\lfloor \tau m\rfloor$ of the observations as outliers and modify the associated responses as $y_i\leftarrow y_i+1,000$. Finally, data is standardized. 

In all our experiments, we set the budget of outliers $m-h$ to be equal to the proportion of outliers $\lfloor \tau m\rfloor$ (as done in \cite{Bertsimas:2014:LQR,ROUSSEEUW06}).

\subsubsection{Metrics}\label{sec:synt_metrics}

In this section we compare the statistical benefits of the different methods. To assess the quality of a given method, we compare two metrics. Given an estimate $\bm{\hat x}$ for a given method, the relative risk defined as $$\texttt{risk}=\frac{\|\bm{x^*}-\bm{\hat x}\|_2^2}{\|\bm{x^*}\|_2^2},$$
where $\bm{x^*}=\bm{1}$ is the ground truth, measures the error of the estimated coefficients. A perfect prediction results in $\texttt{risk}=0$, the naive predition $\bm{\hat x}=\bm{0}$ results in $\texttt{risk}=1$, and method with low breakdown point (e.g., \texttt{ls+l2}) may result in arbitrarily large values of \texttt{risk}. Given a set of candidate outliers encoded by an indicator vector $\bm{\hat z}$, the recall defined as 
$$\texttt{recall}=\frac{\left|\{i\in [m]: \hat z_i=1\text{ and }i \text{ is an outlier}\}\right|}{\lfloor \tau m\rfloor}$$
measures the proportion of outliers that are correctly identified. Note that \texttt{recall} is not defined for \texttt{lad} and \texttt{ls+l2}, since those methods do not explicitly identify outliers. 

\subsubsection{Computational performance}\label{sec:synt_time}
We test small instances for all combinations of parameters $n\in \{2,20\}$, $m\in\{100,500\}$, $\lambda\in \{0.01,0.1,0.2,0.3\}$ and $\tau\in \{0.1,0.2,0.4\}$. For each combination of parameters, we generate five instances. Heuristics such as \texttt{lad} and \texttt{alt-opt} are solved in a fraction of a second. The results for MIO formulations are summarized in Figure~\ref{fig:performance_synt}. We observe that method \texttt{big-M} can solve 50\% of the instances in under 10 minutes, a performance comparable with the one reported in \cite{insolia_2022_outlier_and_feature} (although the data generation process is different). Methods \texttt{conic} and \texttt{conic+} are much faster. In particular, \texttt{conic+} can solve all instances in less than 11 seconds, resulting in at least a two-orders-of-magnitude speedup over~\texttt{big-M}.

\begin{figure}[!ht]
	\centering
\includegraphics[width=0.80\textwidth,trim={11cm 6cm 11cm 6cm},clip]{./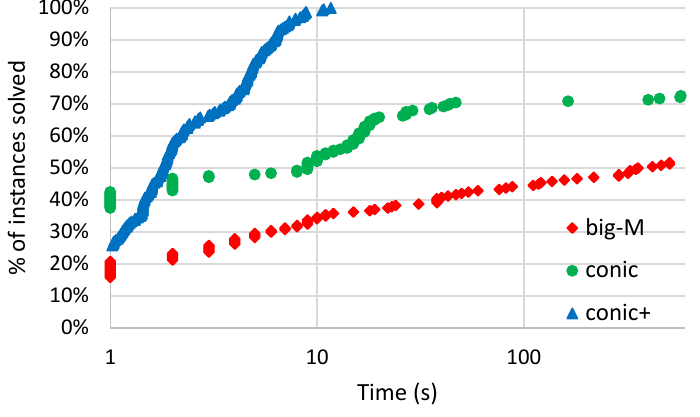}
	\caption{\small Percentage of synthetic instances solved as a function of time (in log scale). Method \texttt{big-M} can solve 52\% of instances in 600 seconds, while formulations \texttt{conic} and \texttt{conic+} require 9 seconds and 2 seconds to solve the same quantity of instances, respectively, thus resulting in 66x and 300x speed-ups in those instances. }
	\label{fig:performance_synt}
\end{figure}

As we observe in our computations with real datasets (see \S\ref{sec:comp_real}), the results here are not representative of the actual performance of the methods in practice. Therefore, we do not provide detailed computational results in this section. We simply comment on the effect of parameters $\tau$ and $\lambda$: instances with small number of outliers ${\lfloor \tau m\rfloor}$ are much easier to solve (most of instances solved to optimality by \texttt{big-M} correspond to small values of $\tau$), and formulation \texttt{conic} benefits from larger values of regularization $\lambda$ as well. Finally, the continuous relaxation of \texttt{conic+} is strong regardless of the combination of parameters, and most of the instances are solved at the root node. 

\subsubsection{Statistical results}\label{sec:synt_res}

We now present the statistical performance for different methods for parameters $(n,m)\in \{(2,100),(20,100),(20,500)\}$. We omit results for methods \texttt{big-M} and \texttt{conic}, since \texttt{conic+} delivers similar solutions much faster. 
In Table~\ref{tab:lambda001}, we compare the performance of \texttt{lad}, and methods \texttt{conic+}, \texttt{alt-opt} and \texttt{ls+l2} with parameter $\lambda=0.01$. The results for $m=100$ are also summarized in Figure~\ref{fig:synt}. Table~\ref{tab:riskLambda} shows the effect of varying the parameter $\lambda$ for the relative risk of estimators \texttt{conic+}, \texttt{alt-opt} and \texttt{ls+l2}.

\begin{figure}[!ht]
	\centering
\subfloat[$n=2$, breakdown]{\includegraphics[width=0.49\textwidth,trim={13cm 5.5cm 13cm 5.5cm},clip]{./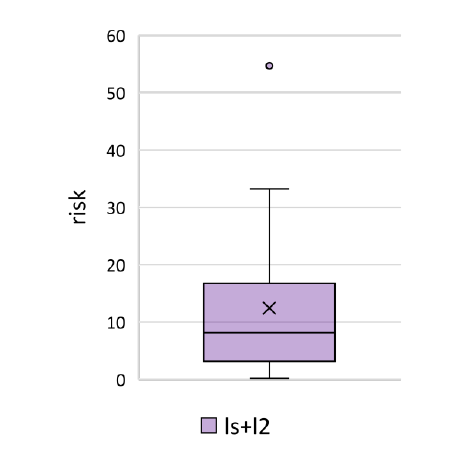}}\hfill\subfloat[$n=2$, robust]{\includegraphics[width=0.49\textwidth,trim={13cm 5.5cm 13cm 5.5cm},clip]{./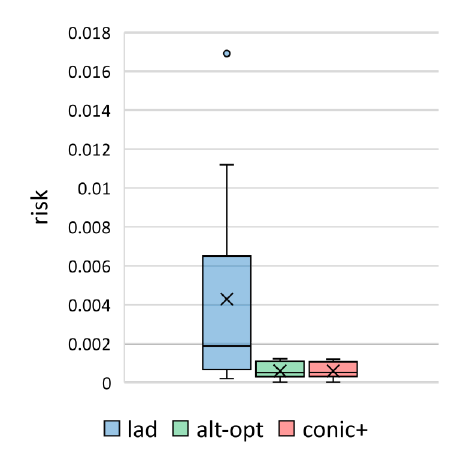}}\hfill\newline 
\subfloat[$n=20$, breakdown]
{\includegraphics[width=0.49\textwidth,trim={13cm 5.5cm 13cm 5.5cm},clip]{./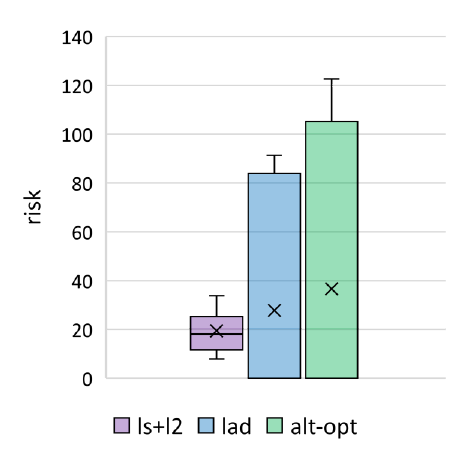}}\hfill\subfloat[$n=20$, robust]{\includegraphics[width=0.49\textwidth,trim={13cm 5.5cm 13cm 5.5cm},clip]{./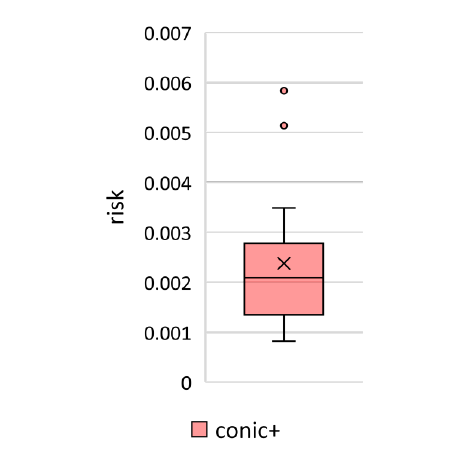}}\hfill\newline 
	\caption{\small Distributions of the relative risk of different methods for instances with $m=100$, $\lambda=0.01$, $\tau\in \{0.1,0.2,0.4\}$. The top row shows instances with $n=2$, and the bottom row shows instances with $n=20$. The left column depicts risk for estimators that broke down, resulting in $\texttt{risk}>1$, and the right column depicts risk for estimators that produced high quality solutions (note the difference in the scale of vertical axis).}
	\label{fig:synt}
\end{figure}

\begin{table}[!ht]
\caption{Statistical performance for different regression methods. We use regularization parameter $\lambda=0.01$ for \texttt{conic+}, \texttt{alt-opt} and \texttt{ls+l2}, while \texttt{lad} have no regularization. Each row represents the average over five instances generated with identical parameters. We show in \textbf{bold} metrics that are the best for that particular class of instances.}
\label{tab:lambda001}
\setlength{\tabcolsep}{2pt}
\begin{center}
\begin{tabular}{ c c  c| c c| c c| c c| c c} 
\hline
\multirow{2}{*}{$\bm{n}$}&\multirow{2}{*}{$\bm{m}$}&\multirow{2}{*}{$\bm{\tau}$}&\multicolumn{2}{c|}{\underline{\texttt{conic+}}}&\multicolumn{2}{c|}{\underline{\texttt{alt-opt}}}&\multicolumn{2}{c|}{\underline{\texttt{lad}}}&\multicolumn{2}{c}{\underline{\texttt{ls+l2}}}\\
&&&\texttt{risk}&\texttt{recall}&\texttt{risk}&\texttt{recall}&\texttt{risk}&\texttt{recall}&\texttt{risk}&\texttt{recall}\\
\hline
2&100&0.1&
\textbf{0.001}&\textbf{ 1.00}&\textbf{0.001}&\textbf{1.00}&\textbf{0.001}&-&6.387&-\\
2&100&0.2&
\textbf{0.001}& \textbf{1.00}&\textbf{0.001}&\textbf{1.00}&0.002&-&7.443&-\\
2&100&0.4&
\textbf{0.001}&\textbf{1.00}&\textbf{0.001}&\textbf{1.00}&0.009&-&23.528&-\\
20&100&0.1&
\textbf{0.001}&\textbf{1.00}&\textbf{0.001}&\textbf{1.00}&0.003&-&12.262&-\\
20&100&0.2&
\textbf{0.002}&\textbf{1.00}&\textbf{0.002}&\textbf{1.00}&0.005&-&19.388&-\\
20&100&0.4&
\textbf{0.004}&\textbf{1.00}&109.979&0.65&83.271 &-&26.580&-\\
20&500&0.1&
\textbf{0.000}&\textbf{1.00}&\textbf{0.000}&\textbf{1.00}&\textbf{0.000} &-&1.631&-\\
20&500&0.2&
\textbf{0.000}&\textbf{1.00}&\textbf{0.000}&\textbf{1.00}&0.001 &-&3.234&-\\
20&500&0.2&
\textbf{0.001}&\textbf{1.00}&\textbf{0.001}&\textbf{1.00}&0.003 &-&4.695&-\\
\end{tabular}
\end{center}
\end{table}

\begin{sidewaystable}
\caption{Relative risk for \texttt{conic+}, \texttt{alt-opt} and \texttt{ls+l2}. Each row represents the average over five instances generated with identical parameters.}
\setlength{\tabcolsep}{2pt}
\label{tab:riskLambda}
\begin{center}
\begin{tabular}{ c c  c| c c c| c c c| c c c| c c c} 
\hline
\multirow{2}{*}{$\bm{n}$}&\multirow{2}{*}{$\bm{m}$}&\multirow{2}{*}{$\bm{\tau}$}&\multicolumn{3}{c|}{\underline{$\lambda=0.01$}}&\multicolumn{3}{c|}{\underline{$\lambda=0.1$}}&\multicolumn{3}{c|}{\underline{$\lambda=0.2$}}&\multicolumn{3}{c}{\underline{$\lambda=0.3$}}\\
&&&\texttt{conic+}&\texttt{alt-opt}&\texttt{ls+l2}&\texttt{conic+}&\texttt{alt-opt}&\texttt{ls+l2}&\texttt{conic+}&\texttt{alt-opt}&\texttt{ls+l2}&\texttt{conic+}&\texttt{alt-opt}&\texttt{ls+l2}\\
\hline
2&100&0.1&
0.001&0.001&6.387&0.009&0.009   &5.316&0.030&0.030&4.420&0.058&0.058&3.739\\
2&100&0.2&
0.001&0.001&7.443&0.011&0.011   &6.332&0.036&0.036&5.382&0.067&0.067&4.645\\
2&100&0.4&
0.001&0.001&23.582&0.015&0.015&19.901&0.051&0.051&16.800&0.096&0.096&14.393\\
20&100&0.1&
0.001&0.001&12.262&0.017&0.017&9.165&0.050&0.050&7.904&0.087&0.096&5.747\\
20&100&0.2&
0.002&0.001&19.388&0.023&0.023&14.848&0.063&0.063&11.609&0.107&0.107&9.410\\
20&100&0.4&
0.004&109.979&26.580&0.070&23.527&20.398&0.115&0.115&15.965&0.178&0.178&12.944\\
20&500&0.1&
0.000&0.000&1.631&0.011&0.011&1.360&0.035&0.035&1.147&0.067&0.067&0.997\\
20&500&0.2&
0.000&0.000&3.234&0.013&0.013&2.676&0.043&0.043&2.228&0.079&0.079&1.900\\
20&500&0.4&
0.001&0.001&4.695&0.022&0.022&3.927&0.067&0.067&3.288&0.117&0.117&2.807\\
\end{tabular}
\end{center}
\end{sidewaystable}

We note that estimator \texttt{conic+} results in the best risk and recall in all instances considered, and is the only estimator which does not break down in instances with $n=20$, $m=100$ and $\tau=0.4$ (i.e., instances with the smallest signal-to-noise ratio and larger number of outliers among those considered). 
We observe that formulation \texttt{ls+l2} in general produces poor solutions, as expected. Indeed, with a breakdown point of $0$, the presence of a single outlier could result in arbitrarily poor solutions, and the instances used contain several outliers. Interestingly, while formulation \texttt{lad} also has a breakdown point of $0$, it produces good solutions in most of the instances considered (although even in those instances, the relative risk can be five times more than the risk of other robust approaches). However, in instances with $n=20$, $m=100$ and $\tau=0.4$, the estimator breaks down and results in extremely poor solutions, worse in fact than those produced by estimator \texttt{ls+l2} which ignores outliers. Heuristic \texttt{alt-opt} matches the performance of \texttt{conic+} in instances where $(n,m,\tau)\neq(20,100,0.4)$ --in fact, the heuristic finds optimal solutions in all these instances--, but fails dramatically in the setting $(n,m,\tau)=(20,100,0.4)$: the solutions produced are poor local minima of \eqref{eq:LTS}, and the statistical properties are in fact worse than \texttt{ls+l2} and \texttt{lad}. 

We see from Table~\ref{tab:riskLambda} that as the regularization parameter $\lambda$ increases, the performance of \texttt{ls+l2} improves (showcasing how the $\ell_2$ regularization induces robustness) but is still substantially worse than \texttt{conic+}. On the other hand, we see that an increase of the regularization parameter results in worse performance for \texttt{conic+}, but the risk remains low for all combinations of regularization tested. Finally we observe that in instances with $(n,m,\tau)=(20,100,0.4)$, an increase of regularization results in much better performance for heuristic \texttt{alt-opt}, and the estimator does not break down if $\lambda\geq 0.2$. However, the resulting risk is larger than the risk of solutions produced by \texttt{conic+} with smaller values of regularization parameter.

\subsection{Experiments with real data}\label{sec:comp_real}

We now discuss experiments with real data. First we present the instances used in \S\ref{sec:real_instances} and the metrics tested in \S\ref{sec:real_metrics}, and then discuss computational times in \S\ref{sec:real_computations} and solution quality in \S\ref{sec:real_quality}.

\subsubsection{Instances}\label{sec:real_instances}

To test the methods we use instances included in the software package ``robust base" \cite{rousseeuw2009robustbase}. Specifically, we select the instances that: \emph{(i)} are regression instances (as opposed to classification), \emph{(ii)} do not have missing data, and \emph{(iii)} are not time series data. The resulting 17 datasets are summarized in Table~\ref{tab:instances}. For each instance, we vary the parameter $\lambda\in \{0.05,0.1,0.2\}$ and the proportion of allowed outliers $m-h\in \{\lfloor 0.1m\rfloor,\lfloor 0.2m\rfloor,\lfloor 0.3m\rfloor,\lfloor 0.4m\rfloor\}$, thus creating 12 different instances for each particular dataset. Note that we separate the instances in nine ``easy" datasets (all satisfying $m\leq 40)$ and eight ``hard" datasets (with $m>40$). This distinction is based on the performance of the \texttt{big-M} formulation: the average time to solve all instances for an easy dataset is less than 15 seconds, whereas for the hard datasets there is at least one instance that could not be solved to optimality within the time limit of 10 minutes.

\begin{table}[!ht]
\caption{Real datasets used. Problems in ``easy" datasets can be solved to optimality by \texttt{big-M} in seconds, whereas time limits are reached for \texttt{big-M} in hard datasets.}
\label{tab:instances}
\setlength{\tabcolsep}{2pt}
\begin{center}
\begin{tabular}{ c | c  c c} 
\hline
& \textbf{name} & $\bm{n}$ & $\bm{m}$\\
\hline
\multirow{9}{*}{Easy}&pension&1&18\\
&phosphor&2&18\\
&salinity&3&18\\
&toxicity&9&18\\
&pilot&1&20\\
&wood&5&20\\
&steamUse&4&38\\
&bushfire&4&38\\
&starsCYG&1&41\\
\hline
\multirow{8}{*}{Hard}&alcohol&6&44\\
&education&4&50\\
&epilepsy& 9 &59\\
&pulpfiber&7&62\\
&wagner&6&63\\
&milk&7&86\\
&foodstamp&3&150\\
&radarimage&4&1,573\\
\hline
\end{tabular}
\end{center}
\end{table}

\subsubsection{Metrics}\label{sec:real_metrics}

On real data, there is no ``ground truth" concerning which points are outliers or the actual values of the regression coefficients. Thus, we limit our comparisons to the performance of MIO algorithms (as measured by time, nodes and optimality gap) and the quality of the solutions obtained in terms of the objective value of \eqref{eq:LTS_MIOCubic} of \texttt{conic+} and \texttt{alt-opt}.

\subsubsection{Computational performance of MIO}\label{sec:real_computations}

Similarly to results with synthetic instances, heuristics such as \texttt{alt-opt} run in a fraction of a second in all cases. In computations with easy datasets, \texttt{big-M} solves the instances in three seconds on average, and under 45 seconds in all cases. Formulation \texttt{conic} also requires three seconds on average as well (and 87 seconds in the worst-case), while formulation \texttt{conic+} requires only one second (and under 10 seconds in the worst case). Note that easy datasets have $m\leq 41$, and full enumeration may be possible in most of the instances. In the interest of shortness, we do not present detailed results on the computations with easy datasets, and focus in this section in the more interesting computations of \texttt{MIO} methods with hard datasets. 

Figure~\ref{fig:performance} presents aggregated results for instances with hard datasets. In particular, it shows the percentages of instances solved by methods \texttt{big-M}, \texttt{conic} and \texttt{conic+} within any given time limit. Observe that the performance of all methods in instances with real datasets is worse than the one reported in synthetic instances, despite real datasets being in some cases smaller by an order-of-magnitude. This discrepancy of performance serves as compelling evidence that synthetic instances should not be used to evaluate the ``scalability" of MIO methods for {\tt{LTS}} or related problems in regression. Indeed, as is well-known in the MIO literature, size of an instance is often a poor proxy of its difficulty. 

We see that the \texttt{big-M} formulation struggles, solving only 22\% of the instances within the time limit of 600 seconds. Formulation \texttt{conic+} is better across the board, requiring only 16 seconds to solve 22\% of the instances, and managing to solve 35\% of the instances overall. Formulation \texttt{conic+} is worse than \texttt{conic} in the simpler instances, but much better in the more difficult ones, managing to solve over 45\% of the instances. Indeed, in instances that can be solved easily by other methods, the additional cost of solving SDPs hurts the performance, but the stronger relaxation pays off in difficult instances.

\begin{figure}[!ht]
	\centering
\includegraphics[width=0.80\textwidth,trim={11cm 6cm 11cm 6cm},clip]{./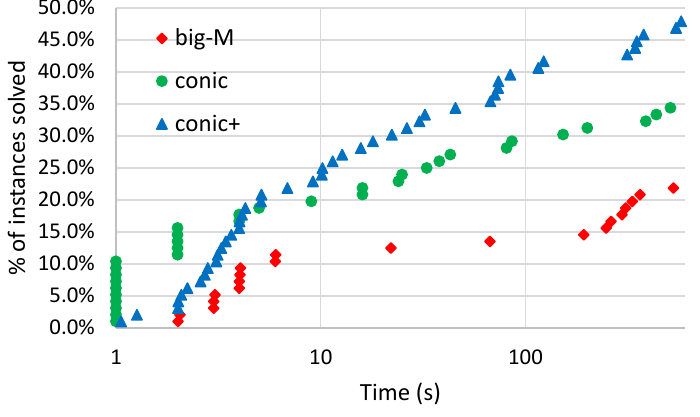}
	\caption{\small Percentage of instances with ``hard" real datasets solved as a function of time (in log scale). Method \texttt{big-M} can solve 22\% of instances in 600 seconds, while formulations \texttt{conic} and \texttt{conic+} require 16 seconds and 7 seconds to solve the same quantity of instances, respectively, thus resulting in 38x and 86x speed-ups in those instances. }
	\label{fig:performance}
\end{figure}

Table~\ref{tab:resultsBudget} presents detailed results for each dataset as a function of the budget parameter $m-h$, and Table~\ref{tab:resultsRegularization} presents detailed results as a function of the regularization parameter $\lambda$. It shows for each dataset and budget/regularization parameter the average time and branch-and-bound nodes used by the solver (time-outs are counted as 600 seconds), as well as the end gaps as reported by the solver (instances solved to optimality count as 0\%). We see that formulation \texttt{big-M} can solve instances if the parameter $m-h$ is small (and the number of feasible solutions $m \choose m-h$ is small as well) but struggles in other instances. Formulations \texttt{conic} and \texttt{conic+} also perform better if the parameter $m-h$ is small (since enumeration is more effective) or if the regularization parameter is large (since the relaxations are stronger). Formulation \texttt{conic} is competitive or better than \texttt{conic+} in the smaller datasets such as alcohol, but \texttt{conic+} is superior overall. 

\begin{table}[!ht]
\caption{Performance of MIO methods as a function of the budget $m-h$. Each row is an average over three instances, with different values of parameter $\lambda$. \texttt{conic+} encountered  numerical difficulties in an instance of radarimage with parameter 
$m-h=\lfloor 0.4 m\rfloor$ and $\lambda=0.2$, which is  indicated with a $\dagger$ in the table.}
\label{tab:resultsBudget}
\setlength{\tabcolsep}{1pt}
\begin{center}
\begin{tabular}{  c c | c c c| c c c | c c c} 
\hline
\multirow{2}{*}{\textbf{name}} & \multirow{2}{*}{$\bm{m-h}$}& \multicolumn{3}{c|}{\underline{\texttt{big-M}}}& \multicolumn{3}{c|}{\underline{\texttt{conic}}} &\multicolumn{3}{c}{\underline{\texttt{conic+}}}\\
&&\texttt{time}&\texttt{nodes}&\texttt{gap}&\texttt{time}&\texttt{nodes}&\texttt{gap}&\texttt{time}&\texttt{nodes}&\texttt{gap}\\
\hline
\multirow{4}{*}{alcohol}&$\lfloor0.1 m \rfloor$ &2&37,063&0.0\%&1&3,880&0.0\%&3&5,481&0.0\%\\
&$\lfloor 0.2m \rfloor$ &174&2,226,198&0.0\%&5&20,999&0.0\%&11&34,757&0.0\%\\
&$\lfloor 0.3 m \rfloor$ 
&600&9,415,411&7.3\%&12&40,484&0.0\%&10&25,980&0.0\%\\
&$\lfloor 0.4 m \rfloor$ 
&576&8,681,299&7.8\%&7&24,367&0.0\%&14&40,210&0.0\%\\
\hline
\multirow{4}{*}{education}&$\lfloor 0.1 m \rfloor$  
&4&65,793&0.0\%&1&2,841&0.0\%&2&1,288&0.0\%\\
&$\lfloor 0.2 m \rfloor$ 
&600&5,015,602&35.1\%&49&73,475&0.0\%&13&21,019&0.0\%\\
&$\lfloor 0.3 m \rfloor$ 
&600&4,037,385&60.4\%&571&601,629&8.9\%&143&179,170&0.0\%\\
&$\lfloor 0.4 m \rfloor$ 
&600&2,961,573&60.0\%&600&772,419&19.6\%&256&142,113&4.4\%\\
\hline
\multirow{4}{*}{epilepsy}&$\lfloor 0.1 m \rfloor$ 
&10&43,399&0.0\%&1&1,382&0.0\%&3&510&0.0\%\\
&$\lfloor 0.2 m \rfloor$ 
&424&4,425,843&1.1\%&26&70,898&0.0\%&6&9,211&0.0\%\\
&$\lfloor 0.3 m \rfloor$ 
&600&5,396,641&29.1\%&600&1,705,862&11.1\%&249&215,514&1.7\%\\
&$\lfloor 0.4 m \rfloor$ 
&600&6,885,941&29.8\%&600&1,922,058&18.6\%&351&519,278&2.1\%\\
\hline
\multirow{4}{*}{pulpfiber}&$\lfloor 0.1 m \rfloor$ 
&5&49,645&0.0\%&2&4,385&0.0\%&3&4,219&0.0\%\\
&$\lfloor 0.2 m \rfloor$ 
&600&2,907,040&26.5\%&397&565,807&2.3\%&414&591,617&0.0\%\\
&$\lfloor 0.3 m \rfloor$ 
&600&2,881,002&35.7\%&600&839,198&13.0\%&600&562,363&11.8\%\\
&$\lfloor 0.4 m \rfloor$ 
&600&3,161,658&36.0\%&600&348,519&16.7\%&600&553,325&12.5\%\\
\hline
\multirow{4}{*}{wagner}&$\lfloor 0.1 m \rfloor$ 
&292&3,080,940&0.0\%&225&464,437&0.0\%&28&68,909&0.0\%\\
&$\lfloor 0.2 m \rfloor$ 
&600&5,003,290&60.1\%&600&873,538&36.9\%&428&737,124&12.2\%\\
&$\lfloor 0.3 m \rfloor$ 
&600&3,249,964&65.9\%&600&553,961&48.1\%&600&313,817&21.4\%\\
&$\lfloor 0.4 m \rfloor$ 
&600&3,263,027&63.1\%&600&730,974&53.0\%&600&279,949&29.3\%\\
\hline
\multirow{4}{*}{milk}&$\lfloor 0.1 m \rfloor$ 
&600&2,166,874&52.6\%&600&664,593&21.4\%&424&414,886&6.8\%\\
&$\lfloor 0.2 m \rfloor$ 
&600&2,334,757&75.6\%&600&615,562&46.7\%&600&283,008&21.9\%\\
&$\lfloor 0.3 m \rfloor$ 
&600&2,342,248&74.7\%&600&601,391&51.4\%&600&263,689&26.1\%\\
&$\lfloor 0.4 m \rfloor$ 
&600&2,457,988&73.4\%&600&453,377&52.4\%&600&336,605&25.0\%\\
\hline
\multirow{4}{*}{foodstamp}&$\lfloor 0.1 m \rfloor$ 
&600&3,679,125&79.7\%&600&962,111&15.6\%&387&652,085&5.4\%\\
&$\lfloor 0.2 m \rfloor$ 
&600&3,428,673&94.1\%&600&937,462&51.1\%&600&628,270&31.0\%\\
&$\lfloor 0.3 m \rfloor$ 
&600&1,365,841&95.4\%&600&232,394&62.7\%&600&260,683&42.2\%\\
&$\lfloor 0.4 m \rfloor$ 
&600&1,568,131&96.2\%&600&250,902&65.9\%&600&260,335&47.0\%\\
\hline
\multirow{4}{*}{radarimage}&$\lfloor 0.1 m \rfloor$ 
&600&117,206&99.7\%&600&18,268&72.7\%&600&4,604&30.8\%\\
&$\lfloor 0.2 m \rfloor$ 
&600&219,463&99.8\%&600&17,128&80.7\%&600&5,066&44.1\%\\
&$\lfloor 0.3 m \rfloor$ 
&600&258,303&99.8\%&600&21,989&87.9\%&600&7,681&53.4\%\\
&$\lfloor 0.4 m \rfloor$ 
&600&255,181&99.9\%&600&19,899&92.1\%&$\dagger$&$\dagger$&$\dagger$\\
\hline
\end{tabular}
\end{center}
\end{table}

\begin{table}[!ht]
\caption{Performance of MIO methods as a function of the regularization $\lambda$. Each row is an average over four instances, with different values of parameter $m-h$. \texttt{conic+} encountered numerical difficulties in an instance of radarimage with parameter 
$m-h=\lfloor 0.4 m\rfloor$ and $\lambda=0.2$, which is indicated with a $\dagger$ in the table.}
\label{tab:resultsRegularization}
\setlength{\tabcolsep}{1pt}
\begin{center}
\begin{tabular}{  c c | c c c| c c c | c c c} 
\hline
\multirow{2}{*}{\textbf{name}} & \multirow{2}{*}{$\bm{\lambda}$}& \multicolumn{3}{c|}{\underline{\texttt{big-M}}}& \multicolumn{3}{c|}{\underline{\texttt{conic}}} &\multicolumn{3}{c}{\underline{\texttt{conic+}}}\\
&&\texttt{time}&\texttt{nodes}&\texttt{gap}&\texttt{time}&\texttt{nodes}&\texttt{gap}&\texttt{time}&\texttt{nodes}&\texttt{gap}\\
\hline
\multirow{3}{*}{alcohol}&0.05&331&4,998,999&2.7\%&15&49,984&0.0\%&18&52,293&0.0\%\\
&0.10&317&5,014,222&4.0\%&4&12,738&0.0\%&8&22,117&0.0\%\\
&0.20&366&5,256,758&4.7\%&1&4,576&0.0\%&3&5,422&0.0\%\\
\hline
\multirow{3}{*}{education}&0.05&451&2,672,308&32.0\%&321&527,878&12.4\%&244&208,764&3.3\%\\
&0.10&452&3,013,939&38.5\%&311&279,599&7.1\%&51&37,059&0.0\%\\
&0.20&451&3,374,019&46.0\%&284&280,297&1.8\%&15&11,871&0.0\%\\
\hline
\multirow{3}{*}{epilepsy}&0.05&456&4,936,641&12.9\%&310&733,918&9.9\%&303&296,293&2.8\%\\
&0.10&392&3,381,921&17.1\%&307&998,623&7.9\%&125&206,952&0.0\%\\
&0.20&378&4,245,307&15.0\%&304&1,042,610&4.5\%&28&55,140&0.0\%\\
\hline
\multirow{3}{*}{pulpfiber}&0.05&451&2,151,589&26.2\%&451&513,025&11.5\%&445&605,184&8.2\%\\
&0.10&451&2,277,393&24.7\%&410&432,838&6.9\%&388&327,832&6.1\%\\
&0.20&452&2,320,527&22.8\%&339&372,570&5.6\%&379&346,628&4.0\%\\
\hline
\multirow{3}{*}{wagner}&0.05&533&4,175,966&49.8\%&547&751,548&40.9\%&467&477,785&26.0\%\\
&0.10&524&3,663,863&47.6\%&500&593,842&34.7\%&453&335,998&16.0\%\\
&0.20&512&3,108,087&44.4\%&472&621,794&27.9\%&322&236,066&5.1\%\\
\hline
\multirow{3}{*}{milk}&0.05&600&2,366,186&70.0\%&600&594,747&57.2\%&600&276,703&36.4\%\\
&0.10&600&2,322,270&69.8\%&600&603,373&44.2\%&600&395,896&19.1\%\\
&0.20&600&2,317,945&67.3\%&600&553,073&27.5\%&467&301,042&4.4\%\\
\hline
\multirow{3}{*}{foodstamp}&0.05&600&2,489,055&92.2\%&600&579,845&62.9\%&600&466,488&46.5\%\\
&0.10&600&2,466,540&90.9\%&600&552,682&49.4\%&586&518,835&31.0\%\\
&0.20&600&2,575,733&90.9\%&600&654,626&34.2\%&453&365,707&16.8\%\\
\hline
\multirow{3}{*}{radarimage}&0.05&600&220,631&99.8\%&600&22,315&92.1\%&600&9,388&67.2\%\\
&0.10&600&200,154&99.8\%&600&22,358&85.5\%&600&4,670&46.1\%\\
&0.20&600&216,832&99.8\%&600&13,291&72.4\%&$\dagger$&$\dagger$&$\dagger$\\
\hline
\end{tabular}
\end{center}
\end{table}

\subsubsection{Solution quality}\label{sec:real_quality}

We now compare the best solutions found by formulation \texttt{conic+} and heuristic \texttt{alt-opt} in the real datasets. For each instance, we compute the gap of any method as $$\text{Gap}=\frac{\zeta_{\text{method}}-\zeta^*}{\zeta^*},$$
where $\zeta_{\text{method}}$ is the objective value found by the method and $\zeta^*$ is the objective value of the best solution found for that instance (by any method). The results are presented in Figure~\ref{fig:gaps_real}. We see that \texttt{alt-opt} produced worse solutions than \texttt{conic+} in close to 40\% of the instances, and in those instances the gaps are relatively large (9\% on average, and has high as 50\% in some instances). In contrast, \texttt{conic+} delivers worse solutions in only 5.4\% of the instances, and the gaps are relatively small in those instances (2\% on average). We conclude that while \texttt{alt-opt} finds optimal solutions (or at least as good as \texttt{alt-opt}) in a good portion of the instances, it may deliver poor quality solutions when it fails. In contrast, \texttt{conic+} seems to be reliable in all cases (at the expense of additional computational time). 

\begin{figure}[!ht]
	\centering
\subfloat[Optimality gap of \texttt{conic+} in the 5.4\% of instances where \texttt{alt-opt} produced better solutions]{\includegraphics[width=0.45\textwidth,trim={13cm 5.5cm 13cm 5.5cm},clip]{./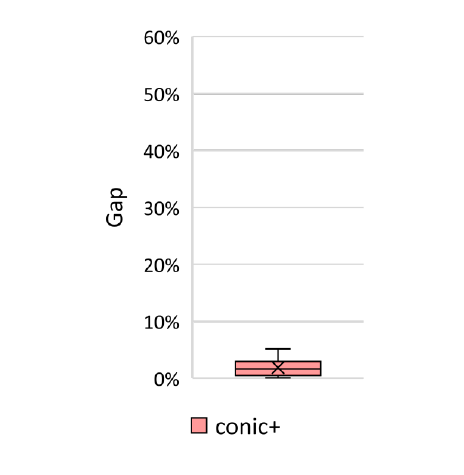}}\hfill\subfloat[Optimality gap of \texttt{alt-opt} in the 38.7\% of instances where \texttt{conic+} produced better solutions]{\includegraphics[width=0.45\textwidth,trim={13cm 5.5cm 13cm 5.5cm},clip]{./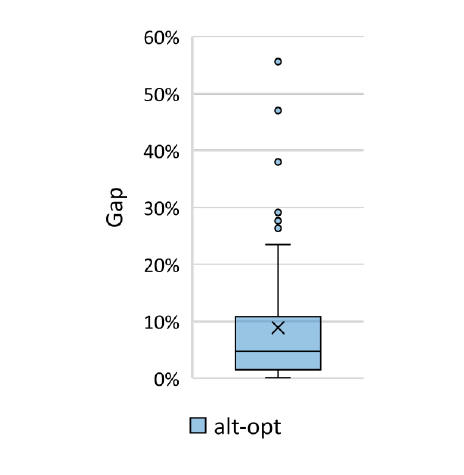}}\hfill
	\caption{\small Optimality gaps of \texttt{conic+} and \texttt{alt-opt} in instances with real datasets where they are outperformed by the other method. Method \texttt{conic+} delivers optimal solutions in most of the instances, and results in small optimality gaps (average 1.9\%) when outperformed. Method \texttt{alt-opt} delivers inferior solutions in more than 1/3 of the instances, with average optimality gaps of (8.9\%), and larger than 25\% in several instances.}
	\label{fig:gaps_real}
\end{figure}

\section{Conclusions}\label{sec:conclusions}

We studied relaxations for a class of mixed-integer optimization problems arising often in statistics. The problems under study are characterized by products of binary variables with nonlinear quadratic terms. Few MIO approaches exist in the literature for the problems considered, and rely on big-M linearizations of the cubic terms, resulting in weak relaxations which provide trivial bounds only. In the paper, we derive the first big-M free relaxations of the problems considered, and our numerical studies with least trimmed squares instances confirm that the suggested relaxations are substantially better than the state-of-the-art. We hope that the study in the paper serves to pave the way for efficient solution of the problems considered via mixed-integer optimization.

\bibliographystyle{abbrv}
\bibliography{references}

\begin{thebibliography}{10}

\bibitem{AGULLO2001425}
J.~Agulló.
\newblock New algorithms for computing the least trimmed squares regression
  estimator.
\newblock {\em Computational Statistics \& Data Analysis}, 36(4):425--439,
  2001.

\bibitem{akturk2009strong}
M.~S. Akt{\"u}rk, A.~Atamt{\"u}rk, and S.~G{\"u}rel.
\newblock A strong conic quadratic reformulation for machine-job assignment
  with controllable processing times.
\newblock {\em Operations Research Letters}, 37(3):187--191, 2009.

\bibitem{ALBERT69}
A.~Albert.
\newblock Conditions for positive and nonnegative definiteness in terms of
  pseudoinverses.
\newblock {\em SIAM Journal on Applied Mathematics}, 17(2):434--440, 1969.

\bibitem{atamturk2019rank}
A.~Atamt\"urk and A.~G\'omez.
\newblock Rank-one convexification for sparse regression.
\newblock {\em arXiv preprint arXiv:1901.10334}, 2019.

\bibitem{atamturk2020safe}
A.~Atamturk and A.~G{\'o}mez.
\newblock Safe screening rules for $\ell_0$-regression from perspective
  relaxations.
\newblock In {\em International Conference on Machine Learning}, pages
  421--430. PMLR, 2020.

\bibitem{atamturk2021sparse}
A.~Atamt{\"u}rk, A.~G{\'o}mez, and S.~Han.
\newblock Sparse and smooth signal estimation: convexification of
  {L0}-formulations.
\newblock {\em The Journal of Machine Learning Research}, 22(1):2370--2412,
  2021.

\bibitem{ben2022new}
W.~Ben-Ameur and J.~Neto.
\newblock New bounds for subset selection from conic relaxations.
\newblock {\em European Journal of Operational Research}, 298(2):425--438,
  2022.

\bibitem{BERNHOLT2005b}
T.~Bernholt.
\newblock Computing the least median of squares estimator in time o(nd).
\newblock In O.~Gervasi, M.~L. Gavrilova, V.~Kumar, A.~Lagan{\`a}, H.~P. Lee,
  Y.~Mun, D.~Taniar, and C.~J.~K. Tan, editors, {\em Computational Science and
  Its Applications -- ICCSA 2005}, pages 697--706, Berlin, Heidelberg, 2005.
  Springer Berlin Heidelberg.

\bibitem{Bernholt05}
T.~Bernholt.
\newblock Robust estimators are hard to compute.
\newblock Technical report, Univ. Dortmund, 2005.

\bibitem{bertsimas2016best}
D.~Bertsimas, A.~King, and R.~Mazumder.
\newblock Best subset selection via a modern optimization lens.
\newblock {\em The Annals of Statistics}, 44(2):813--852, 2016.

\bibitem{Bertsimas:2014:LQR}
D.~Bertsimas and R.~Mazumder.
\newblock Least quantile regression via modern optimization.
\newblock {\em Annals of Statistics}, 42(6):2494--2525, 2014.

\bibitem{bertsimas2020sparse}
D.~Bertsimas and B.~Van~Parys.
\newblock Sparse high-dimensional regression: Exact scalable algorithms and
  phase transitions.
\newblock {\em The Annals of Statistics}, 48(1):300--323, 2020.

\bibitem{NIPS2015_bhatia_hard_thresholding}
K.~Bhatia, P.~Jain, and P.~Kar.
\newblock Robust regression via hard thresholding.
\newblock In {\em Proceedings of the 28th International Conference on Neural
  Information Processing Systems - Volume 1}, NIPS'15, page 721–729,
  Cambridge, MA, USA, 2015. MIT Press.

\bibitem{cozad2014learning}
A.~Cozad, N.~V. Sahinidis, and D.~C. Miller.
\newblock Learning surrogate models for simulation-based optimization.
\newblock {\em AIChE Journal}, 60(6):2211--2227, 2014.

\bibitem{cozad2015combined}
A.~Cozad, N.~V. Sahinidis, and D.~C. Miller.
\newblock A combined first-principles and data-driven approach to model
  building.
\newblock {\em Computers \& Chemical Engineering}, 73:116--127, 2015.

\bibitem{dong2015regularization}
H.~Dong, K.~Chen, and J.~Linderoth.
\newblock Regularization vs. relaxation: A conic optimization perspective of
  statistical variable selection.
\newblock {\em arXiv preprint arXiv:1510.06083}, 2015.

\bibitem{dunn2018optimal}
J.~W. Dunn.
\newblock {\em Optimal trees for prediction and prescription}.
\newblock PhD thesis, Massachusetts Institute of Technology, 2018.

\bibitem{el1997robust}
L.~El~Ghaoui and H.~Lebret.
\newblock Robust solutions to least-squares problems with uncertain data.
\newblock {\em SIAM Journal on matrix analysis and applications},
  18(4):1035--1064, 1997.

\bibitem{frangioni2006perspective}
A.~Frangioni and C.~Gentile.
\newblock Perspective cuts for a class of convex 0--1 mixed integer programs.
\newblock {\em Mathematical Programming}, 106:225--236, 2006.

\bibitem{giloni2002least}
A.~Giloni and M.~Padberg.
\newblock Least trimmed squares regression, least median squares regression,
  and mathematical programming.
\newblock {\em Mathematical and Computer Modelling}, 35(9-10):1043--1060, 2002.

\bibitem{doi:10.1137/19M1306233}
A.~G\'{o}mez.
\newblock Outlier detection in time series via mixed-integer conic quadratic
  optimization.
\newblock {\em SIAM Journal on Optimization}, 31(3):1897--1925, 2021.

\bibitem{gomez2021mixed}
A.~G{\'o}mez and O.~A. Prokopyev.
\newblock A mixed-integer fractional optimization approach to best subset
  selection.
\newblock {\em INFORMS Journal on Computing}, 33(2):551--565, 2021.

\bibitem{gunluk2010perspective}
O.~G{\"u}nl{\"u}k and J.~Linderoth.
\newblock Perspective reformulations of mixed integer nonlinear programs with
  indicator variables.
\newblock {\em Mathematical programming}, 124:183--205, 2010.

\bibitem{HAMPEL1971}
F.~R. Hampel.
\newblock A general qualitative definition of robustness.
\newblock {\em The Annals of Mathematical Statistics}, 42(6):1887--1896, 1971.

\bibitem{HAWKINS1994185}
D.~M. Hawkins.
\newblock The feasible solution algorithm for least trimmed squares regression.
\newblock {\em Computational Statistics \& Data Analysis}, 17(2):185--196,
  1994.

\bibitem{hazimeh2020fast}
H.~Hazimeh and R.~Mazumder.
\newblock Fast best subset selection: Coordinate descent and local
  combinatorial optimization algorithms.
\newblock {\em Operations Research}, 68(5):1517--1537, 2020.

\bibitem{hazimeh2022l0learn}
H.~Hazimeh, R.~Mazumder, and T.~Nonet.
\newblock {L0Learn}: A scalable package for sparse learning using {L0}
  regularization.
\newblock {\em arXiv preprint arXiv:2202.04820}, 2022.

\bibitem{hazimeh2022sparse}
H.~Hazimeh, R.~Mazumder, and A.~Saab.
\newblock Sparse regression at scale: Branch-and-bound rooted in first-order
  optimization.
\newblock {\em Mathematical Programming}, 196(1-2):347--388, 2022.

\bibitem{HUBER73}
P.~Huber.
\newblock Robust regression: Asymptotics, conjectures and monte carlo.
\newblock {\em Ann. Statist.}, 1:799--821, 1973.

\bibitem{HUBER11}
P.~Huber.
\newblock {\em Robust Statistics}.
\newblock Springer, Berlin, 2011.

\bibitem{insolia_2022_outlier_and_feature}
L.~Insolia, A.~Kenney, F.~Chiaromonte, and G.~Felici.
\newblock Simultaneous feature selection and outlier detection with optimality
  guarantees.
\newblock {\em Biometrics}, 78(4):1592--1603, 2022.

\bibitem{kimura2018minimization}
K.~Kimura and H.~Waki.
\newblock Minimization of {A}kaike's information criterion in linear regression
  analysis via mixed integer nonlinear program.
\newblock {\em Optimization Methods and Software}, 33(3):633--649, 2018.

\bibitem{kronqvist2022p}
J.~Kronqvist, R.~Misener, and C.~Tsay.
\newblock P-split formulations: A class of intermediate formulations between
  big-{M} and convex hull for disjunctive constraints.
\newblock {\em arXiv preprint arXiv:2202.05198}, 2022.

\bibitem{liu2022graph}
P.~Liu, S.~Fattahi, A.~G{\'o}mez, and S.~K{\"u}{\c{c}}{\"u}kyavuz.
\newblock A graph-based decomposition method for convex quadratic optimization
  with indicators.
\newblock {\em Mathematical Programming}, pages 1--33, 2022.

\bibitem{LU2002119}
T.-T. Lu and S.-H. Shiou.
\newblock Inverses of 2 × 2 block matrices.
\newblock {\em Computers \& Mathematics with Applications}, 43(1):119--129,
  2002.

\bibitem{manzour2021integer}
H.~Manzour, S.~K{\"u}{\c{c}}{\"u}kyavuz, H.-H. Wu, and A.~Shojaie.
\newblock Integer programming for learning directed acyclic graphs from
  continuous data.
\newblock {\em INFORMS Journal on Optimization}, 3(1):46--73, 2021.

\bibitem{mazumder2023subset}
R.~Mazumder, P.~Radchenko, and A.~Dedieu.
\newblock Subset selection with shrinkage: Sparse linear modeling when the snr
  is low.
\newblock {\em Operations Research}, 71(1):129--147, 2023.

\bibitem{mazumder2023linear}
R.~Mazumder and H.~Wang.
\newblock Linear regression with partially mismatched data: local search with
  theoretical guarantees.
\newblock {\em Mathematical Programming}, 197(2):1265--1303, 2023.

\bibitem{miyashiro2015mixed}
R.~Miyashiro and Y.~Takano.
\newblock Mixed integer second-order cone programming formulations for variable
  selection in linear regression.
\newblock {\em European Journal of Operational Research}, 247(3):721--731,
  2015.

\bibitem{MOUNT14}
D.~Mount, N.~Netanyahu, C.~Piatko, R.~Silverman, and A.~Wu.
\newblock On the least trimmed squares estimator.
\newblock {\em Algorithmica}, 69, 2014.

\bibitem{NESTEROV94}
Y.~Nesterov and A.~Nemirovskii.
\newblock {\em Interior-Point Polynomial Algorithms in Convex Programming}.
\newblock Society for Industrial and Applied Mathematics, 1994.

\bibitem{park2020subset}
Y.~W. Park and D.~Klabjan.
\newblock Subset selection for multiple linear regression via optimization.
\newblock {\em Journal of Global Optimization}, 77(3):543--574, 2020.

\bibitem{ROUSSEEUW84}
P.~Rousseeuw.
\newblock Least median of squares regression.
\newblock {\em Journal of the American Statistical Association},
  79(388):871--880, 1984.

\bibitem{rousseeuw2009robustbase}
P.~Rousseeuw, C.~Croux, V.~Todorov, A.~Ruckstuhl, M.~Salibian-Barrera,
  T.~Verbeke, M.~Koller, and M.~Maechler.
\newblock Robustbase: basic robust statistics.
\newblock {\em R package version 0.4-5, URL http://CRAN. R-project.
  org/package= robustbase}, 2009.

\bibitem{ROUSSEEUW87}
P.~Rousseeuw and A.~Leroy.
\newblock {\em Robust regression and outlier detection}, volume 589.
\newblock John Wiley \& Sons, 1987.

\bibitem{ROUSSEEUW06}
P.~Rousseeuw and K.~Van~Driessen.
\newblock Computing {LTS} regression for large data sets.
\newblock {\em Data Min Knowl Disc}, 12:29--45, 2006.

\bibitem{rousseeuw1984robust}
P.~Rousseeuw and V.~Yohai.
\newblock Robust regression by means of s-estimators.
\newblock In {\em Robust and Nonlinear Time Series Analysis: Proceedings of a
  Workshop Organized by the Sonderforschungsbereich 123 “Stochastische
  Mathematische Modelle”, Heidelberg 1983}, pages 256--272. Springer, 1984.

\bibitem{NEURIPS2019_c91e3483}
Y.~Shen and S.~Sanghavi.
\newblock Iterative least trimmed squares for mixed linear regression.
\newblock In H.~Wallach, H.~Larochelle, A.~Beygelzimer, F.~d\textquotesingle
  Alch\'{e}-Buc, E.~Fox, and R.~Garnett, editors, {\em Advances in Neural
  Information Processing Systems}, volume~32. Curran Associates, Inc., 2019.

\bibitem{pmlr-v97-shen19e}
Y.~Shen and S.~Sanghavi.
\newblock Learning with bad training data via iterative trimmed loss
  minimization.
\newblock In K.~Chaudhuri and R.~Salakhutdinov, editors, {\em Proceedings of
  the 36th International Conference on Machine Learning}, volume~97 of {\em
  Proceedings of Machine Learning Research}, pages 5739--5748. PMLR, 09--15 Jun
  2019.

\bibitem{SM49}
J.~Sherman and W.~Morrison.
\newblock Adjustment of an inverse matrix corresponding to changes in the
  elements of a given column or a given row of the original matrix.
\newblock {\em Annals of Mathematical Statistics}, 20(4):620--624, 1949.

\bibitem{SM50}
J.~Sherman and W.~Morrison.
\newblock Adjustment of an inverse matrix corresponding to a change in one
  element of a given matrix.
\newblock {\em Annals of Mathematical Statistics}, 21(1):124--127, 1950.

\bibitem{WEI22}
L.~Wei, A.~Atamtürk, A.~G\'omez, and S.~Kü\c{c}ükyavuz.
\newblock On the convex hull of convex quadratic optimization problems with
  indicators.
\newblock {\em Forthcoming in Mathematical Programming}, 2023.

\bibitem{wilson2017alamo}
Z.~T. Wilson and N.~V. Sahinidis.
\newblock The alamo approach to machine learning.
\newblock {\em Computers \& Chemical Engineering}, 106:785--795, 2017.

\bibitem{xie2020scalable}
W.~Xie and X.~Deng.
\newblock Scalable algorithms for the sparse ridge regression.
\newblock {\em SIAM Journal on Optimization}, 30(4):3359--3386, 2020.

\bibitem{zheng2014improving}
X.~Zheng, X.~Sun, and D.~Li.
\newblock Improving the performance of {MIQP} solvers for quadratic programs
  with cardinality and minimum threshold constraints: A semidefinite program
  approach.
\newblock {\em INFORMS Journal on Computing}, 26(4):690--703, 2014.

\bibitem{ZIOUTAS05}
G.~Zioutas and A.~Avramidis.
\newblock Deleting outliers in robust regression with mixed integer
  programming.
\newblock {\em Acta Mathematicae Applicatae Sinica}, 21:323--334, 2005.

\bibitem{ZIOUTAS09}
G.~Zioutas, L.~Pitsoulis, and A.~Avramidis.
\newblock Quadratic mixed integer programming and support vectors for deleting
  outliers in robust regression.
\newblock {\em Annals of Operations Research}, 166(1):339--353, 2009.

\end{thebibliography}

\end{document}